\documentclass[10pt, reqno]{amsart}
\usepackage[dvipsnames]{xcolor}
\usepackage{amsmath}
\usepackage{amsfonts}
\usepackage{amssymb}
\usepackage{amsthm}
\usepackage{tikz}
\usepackage[sc]{mathpazo}
\usepackage{bm}
\usepackage{graphicx}

\usepackage{algorithm}
\usepackage{algpseudocode}

\addtolength{\textheight}{15pt}


\usepackage{enumitem}
    \setenumerate{itemsep=5pt} 
    \setitemize{itemsep=5pt} 

\usepackage[font=small,labelfont=bf]{caption}

\theoremstyle{plain}
\newtheorem{theorem}{Theorem}
\numberwithin{theorem}{section}
\newtheorem{proposition}[theorem]{Proposition}
\newtheorem{lemma}[theorem]{Lemma}
\newtheorem{corollary}[theorem]{Corollary}

\theoremstyle{definition}

\newtheorem{remark}[theorem]{Remark}
\newtheorem{example}[theorem]{Example}

\numberwithin{equation}{section}

\newcommand{\superdiag}{\operatorname{superdiag}}

\newcommand{\adj}{\operatorname{adj}}

\newcommand{\Z}{\mathbb{Z}}

\newcommand{\K}{\mathbb{K}}
\newcommand{\M}{\mathrm{M}}

\newcommand{\N}{\mathbb{N}}
\newcommand{\Q}{\mathbb{Q}}

\renewcommand{\P}{\mathcal{P}}
\newcommand{\p}{\mathfrak{p}}

\newcommand{\mdim}{\operatorname{dim}_{\mathrm{mat}}}
\renewcommand{\pmod}[1]{\,\,(\operatorname{mod} #1)}
\newcommand{\0}{{\color{lightgray}0}}

\renewcommand{\S}{\mathcal{S}}
\newcommand{\SL}{\mathrm{SL}}

\newcommand{\tr}{\operatorname{tr}}

\renewcommand{\vec}[1]{{\bm{#1}}}

\newcommand{\minimatrix}[4]{\Big[ \begin{smallmatrix} #1 & #2 \\ #3 & #4 \end{smallmatrix} \Big]}

\newcommand{\semigroup}[1]{\langle #1 \rangle}

\allowdisplaybreaks
\usepackage{hyperref}

\begin{document}

\title[Numerical semigroups from rational matrices I]{Numerical semigroups from rational matrices I: power-integral matrices and nilpotent representations}
\author[A.~Chhabra]{Arsh Chhabra}
\address{Department of Mathematics and Statistics, Pomona College, 610 N. College Ave., Claremont, CA 91711, USA}
\email{acaa2021@mymail.pomona.edu}

\author[S.R.~Garcia]{Stephan Ramon Garcia}
\address{Department of Mathematics and Statistics, Pomona College, 610 N. College Ave., Claremont, CA 91711, USA}
\email{stephan.garcia@pomona.edu}
\urladdr{\url{https://stephangarcia.sites.pomona.edu/}}

\author[F.~Zhang]{Fangqian Zhang}
\address{Department of Mathematics
South Hall, Room 6607
University of California
Santa Barbara, CA 93106-3080, USA}
\email{fangqian@ucsb.edu}

\author[H.~Zhang]{Hechun Zhang}
\address{Department of Mathematical Sciences, Tsinghua University, Beijing, 100084, P. R. China.}
\email{hczhang@tsinghua.edu.cn}

\thanks{SRG was partially supported by NSF grant DMS-2054002. Hechun Zhang was partially supported by National Natural Science Foundation of China grants No. 12031007 and No. 11971255}

\begin{abstract}
Our aim in this paper is to initiate the study of exponent semigroups for rational matrices.  We prove that every numerical semigroup is the exponent semigroup of some rational matrix.  We also obtain lower bounds on the size of such matrices and discuss the related class of power-integral matrices.
\end{abstract}

%
%
%
%
%
%
%

\keywords{numerical semigroup; semigroup; rational matrix; Frobenius number}
\maketitle

\section{Introduction}
A \emph{numerical semigroup} is a subsemigroup  of $\N = \{0,1,2,\ldots\}$ with finite complement  \cite{Assi, Rosales}.
The largest natural number not in $S$ is the \emph{Frobenius number} $g(S)$ of $S$.
For each numerical semigroup $S$, there is a unique minimal system of \emph{generators} 
$1 \leq n_1 < n_2 < \cdots < n_k$ 
such that $\gcd(n_1,n_2,\ldots,n_k) = 1$ and
\begin{equation*}
S = \semigroup{n_1,n_2,\ldots,n_k}
= \{ a_1 n_1 + a_2 n_2 + \cdots + a_k n_k : a_1,a_2,\ldots,a_k \in \N\},
\end{equation*}
the commutative subsemigroup of $\N$ generated by $n_1,n_2,\ldots,n_k$ \cite[Thm.~2.7]{Rosales}.
Here $e(S) = k$ is the \emph{embedding dimension} of $S$ and $m(S) = n_1$ is the \emph{multiplicity} of $S$.

Let $\M_d(\cdot)$ be the set of $d \times d$ matrices with entries in the specified set, which in this paper is usually 
the set $\Z$ of integers or the set $\Q$ of rational numbers. 
The \emph{exponent semigroup} of $A \in \M_d(\Q)$ is 
\begin{equation*}
\S(A) = \{ n \in \N : A^n \in \M_d(\Z)\}.
\end{equation*}
Since $A^{j+k} = A^j A^k$ for $j,k \in \N$
and $A^0 = I \in \M_d(\Z)$, it is clear that $\S(A)$ is a subsemigroup of $\N$.
If $\S(A) = \{0\}$, then $\S(A)$ is \emph{trivial}; otherwise it is \emph{nontrivial}.

For example, the exponent semigroup of 
\begin{equation}\label{eq:NuggetMatrix}
\left[
\begin{smallmatrix}
 \frac{31837}{256} & \frac{31899}{256} & -\frac{9751}{128} & -\frac{3857}{32} & \frac{7703}{64} & -\frac{10313}{256} \\[3pt]
 -\frac{140489}{256} & -\frac{233823}{256} & \frac{33843}{128} & \frac{24487}{32} & -\frac{35891}{64} & \frac{119973}{256} \\[3pt]
 \frac{101403}{128} & \frac{176689}{128} & -\frac{21255}{64} & -\frac{34955}{32} & \frac{25351}{32} & -\frac{95767}{128} \\[3pt]
 -\frac{109715}{256} & -\frac{180477}{256} & \frac{25637}{128} & \frac{4647}{8} & -\frac{27685}{64} & \frac{93303}{256} \\[3pt]
 \frac{30963}{64} & \frac{61421}{64} & -\frac{5885}{32} & -\frac{2991}{4} & \frac{7933}{16} & -\frac{35063}{64} \\[3pt]
 -\frac{12355}{128} & -\frac{15989}{128} & \frac{4369}{64} & \frac{2125}{16} & -\frac{3345}{32} & \frac{5687}{128} \\
\end{smallmatrix}
\right]
\end{equation}
is $\semigroup{6,9,20}$, the McNugget monoid; no matrix of smaller dimension
can produce this semigroup (Example \ref{Example:Nugget}).

The most obvious question about exponent semigroups is the following:
\begin{equation*}
\textit{Is every semigroup in $\N$ the exponent semigroup for a rational matrix?}
\end{equation*}
We answer this question in the affirmative (Theorem \ref{Theorem:Nilpotent}).  In fact, our proof is constructive and provides
a fast algorithm to find such representations.  An easy corollary (Corollary \ref{Corollary:General}) asserts that
any subsemigroup in $\N$, numerical or not, is the exponent semigroup of a rational matrix.

These results ensure the \emph{matricial dimension}  
\begin{equation*}
\mdim S = \min\{ d \geq 1 : \text{there is an $A \in \M_d(\Q)$ such that $S = \S(A)$}\}
\end{equation*}
of a semigroup $S \subseteq \N$ is well defined.  We address the relationship between $\mdim S$
and several of the important quantities introduced above.
For example, Theorem \ref{Theorem:EmbeddingSharp} says that $\mdim S \geq m(S)$
if $S$ is a symmetric numerical semigroup.

\begin{remark}
There is a dynamical interpretation of exponent semigroups.
Define $f:\N \to [0,1)^{d^2}$ by $f(n) = A^n\pmod{1}$ and observe
\begin{equation*}
\S(A) = f^{-1}( \{0 \}) =   \{ n \in \N : A^n \equiv 0 \pmod{1} \},
\end{equation*}
so $\S(A) \neq \{0\}$ if and only if the forward orbit of $A$ in the $d^2$-dimensional torus returns to $0$.
The case $d=1$ is uninteresting for us since $\S(A) = \{0\}$ or $\N$.  
\end{remark}

This paper is organized as follows.
Section \ref{Section:Preliminaries} contains preliminary results and instructive examples.
In Section \ref{Section:PowerIntegral}, we discuss power-integral matrices, a class of matrices closely connected to exponent semigroups. 
We study cyclicity for exponent semigroups in Section \ref{Section:Cyclic}.
We obtain bounds on the matricial dimension
for symmetric and pseudosymmetric semigroups in Section \ref{Section:Symmetric}.
In Section \ref{Section:Nilpotent} we construct an exponent semigroup representation for every subsemigroup of $\N$.

\medskip\noindent\textbf{Acknowledgments.}
We thank Lenny Fukshansky for simplifying our proof of (c) $\Rightarrow$ (a) in Theorem \ref{Theorem:TFAE}
and Mohamed Omar for a helpful conversation about symmetric functions.  We thank the anonymous referee for
several helpful suggestions, including streamlining the proof of Theorem \ref{Theorem:Nilpotent}.

\section{Preliminaries}\label{Section:Preliminaries}
This section contains some preliminary results and basic examples (see Table \ref{Table:Examples}).
Although elementary, they set the stage for what is to come.

\begin{proposition}\label{Proposition:DimOne}
$\{0\}$ and $\N$ are the only subsemigroups of $\N$ with matricial dimension $1$.
That is, a proper, nontrivial semigroup in $\N$ has matricial dimension at least $2$.
\end{proposition}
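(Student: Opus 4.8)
The plan is to pin down exactly which semigroups arise as $\S(A)$ when $A \in \M_1(\Q)$, since by definition $\mdim S = 1$ means precisely that $S = \S(A)$ for some $1 \times 1$ rational matrix. A $1 \times 1$ matrix is nothing but a single rational number $a$ (with $A^n$ identified with $a^n$), so the entire question collapses to describing the set $\{n \in \N : a^n \in \Z\}$ as $a$ ranges over $\Q$.

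First I would write $a = p/q$ in lowest terms, with $q \geq 1$ and $\gcd(p,q) = 1$, and split into two cases. If $q = 1$, then $a \in \Z$ and every power $a^n$ lies in $\Z$, so $\S(a) = \N$. If $q > 1$, the decisive observation is that coprimality is preserved under taking powers: $\gcd(p,q) = 1$ forces $\gcd(p^n, q^n) = 1$, so $a^n = p^n / q^n$ is already in lowest terms with denominator $q^n > 1$ for every $n \geq 1$. Hence $a^n \notin \Z$ for all $n \geq 1$, and since $a^0 = 1 \in \Z$ always holds, we conclude $\S(a) = \{0\}$.

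This establishes that every $1 \times 1$ rational matrix produces either $\N$ or $\{0\}$, so no other semigroup can have matricial dimension $1$; this is the direction doing the real work. To finish, I would verify the converse by exhibiting witnesses that $\N$ and $\{0\}$ genuinely attain matricial dimension $1$: taking $a = 1$ gives $\S(a) = \N$, and taking $a = \tfrac12$ gives $\S(a) = \{0\}$, so both values are realized in dimension one. There is essentially no obstacle here; the only substantive ingredient is the elementary fact that a reduced fraction stays reduced under exponentiation, which immediately rules out any intermediate behavior and forces the sharp dichotomy.
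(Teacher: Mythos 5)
Your argument is correct and is essentially the paper's proof: the paper likewise observes that $\S([a])=\N$ if $a\in\Z$ and $\S([a])=\{0\}$ otherwise, leaving implicit the reduced-fraction fact you spell out. You have simply made explicit the elementary step that a fraction in lowest terms stays in lowest terms under exponentiation.
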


\begin{proof}
For a $1 \times 1$ rational matrix $[a]$, we have
$
\S([a]) = 
\begin{cases}
\{0\} & \text{if $a \notin \Z$},\\
\N & \text{if $a \in \Z$}. 
\end{cases}
$
\end{proof}

\begin{proposition}\label{Proposition:DimTwo}
The following semigroups have matricial dimension $2$.
\begin{enumerate}[leftmargin=*]
\item Cyclic semigroups; that is, semigroups of the form $m\N = \semigroup{m}$, in which $m \geq 2$.
\item $\{0\} \cup (m+\N) = \{0,m,m+1,\ldots\}$, in which $m \geq 2$.
\item $\semigroup{2,k}$, in which $k \geq 3$ is odd.
\end{enumerate}
\end{proposition}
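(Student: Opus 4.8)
The plan is to reduce the entire proposition to three explicit $2 \times 2$ constructions. First I would observe that each of the three semigroups is proper and nontrivial: none contains $1$ (in (a) because $m \geq 2$, in (b) because $1 < m$, in (c) because $1$ is odd and lies below $k$), and each contains a positive element. Hence Proposition~\ref{Proposition:DimOne} immediately gives $\mdim S \geq 2$ in all three cases, and it remains only to exhibit, for each $S$, a rational $2\times 2$ matrix $A$ with $\S(A) = S$; the two bounds then force $\mdim S = 2$.

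For (a) I would look for an $A$ whose powers grow linearly rather than periodically. The natural first guess --- a finite-order matrix realizing $A^m = I$ --- cannot work for $m \geq 5$, since the torsion elements of $\GL_2(\Z)$ have order only $1,2,3,4,6$. This is the conceptual crux of the whole proposition, and it forces a construction in which integrality of $A^n$ is governed by an arithmetic condition on $n$ rather than by periodicity. I would therefore take $A = I + N$ with $N$ nilpotent ($N^2 = 0$), so that $A^n = I + nN$, and choose $N = \minimatrix{-1}{-1/m}{m}{1}$, which has trace and determinant $0$ and hence squares to $0$. Then $A^n = I + nN$ is integral precisely when $m \mid n$, giving $\S(A) = m\N$.

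For (b) I would engineer the denominators to clear only once the exponent is large. A scaled Jordan block $A = \minimatrix{p}{p^{1-m}}{0}{p}$ has $A^n = \minimatrix{p^n}{n\,p^{\,n-m}}{0}{p^n}$, whose off-diagonal entry $n\,p^{\,n-m}$ is an integer exactly when $n \geq m$ (together with the case $n = 0$). Choosing $p = m$, or indeed any integer $p \geq m$, makes the entry $n/p^{\,m-n}$ non-integral for $1 \leq n \leq m-1$, since then $p^{\,m-n} > n$. Thus $\S(A) = \{0\} \cup (m + \N)$.

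For (c), writing $k = 2\ell + 1$ with $\ell \geq 1$, I would combine a period-$2$ mechanism with the denominator-clearing idea using the anti-diagonal matrix $A = \minimatrix{0}{p^{-\ell}}{p^{\ell+1}}{0}$ for an integer $p \geq 2$. Here $A^2 = pI$, so $A^{2j} = p^j I$ is always integral (the even part of the semigroup), while $A^{2j+1} = p^j A$ has entries $p^{\,j-\ell}$ and $p^{\,j+\ell+1}$ and is therefore integral exactly when $j \geq \ell$, i.e. when the odd exponent $2j+1$ is at least $k$. This yields $\S(A) = \semigroup{2,k}$, the set of all even numbers together with the odd numbers $\geq k$. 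I expect the only genuine obstacle to be the realization, in (a), that periodicity is unavailable; once the three matrices are written down, each verification is a one-line computation of $A^n$ (via Cayley--Hamilton in (a), and by direct induction on the triangular or anti-diagonal form in (b) and (c)) together with the elementary divisibility checks indicated above.
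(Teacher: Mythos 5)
Your proposal is correct and follows essentially the same route as the paper: the lower bound comes from Proposition~\ref{Proposition:DimOne}, and each case is settled by an explicit $2\times 2$ matrix whose powers are computed in closed form (a unipotent-plus-nilpotent matrix for (a), a scaled triangular block for (b), and an anti-diagonal matrix with $A^2 = pI$ for (c), which for $p=2$ is the paper's matrix exactly). The only differences are cosmetic choices of the witnesses in (a) and (b), and all of your divisibility verifications check out.
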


\begin{proof}
\noindent(a) If $A = \minimatrix{1}{\frac{1}{m}}{0}{1}$, then $A^n = \minimatrix{1}{\frac{n}{m}}{0}{1} \in \M_2(\Z)$ if and only if $m \mid n$.

\medskip\noindent(b) If $A = \minimatrix{2}{2^{-m}}{0}{0}$, then $A^n = \minimatrix{2^n}{2^{n-m}}{0}{0} \in \M_2(\Z)$ if and only if $n \geq m$.

\medskip\noindent(c) If 
$A=\Big[\begin{smallmatrix}
        0 & 2^{-\lfloor \frac{k}{2} \rfloor}\\
        2^{\lfloor \frac{k}{2} \rfloor+1} &0\\
\end{smallmatrix}\Big]$, then
\begin{equation*}
A^n = 
\begin{cases}
\minimatrix{2^{\frac{n}{2}}}{\0}{\0}{2^{\frac{n}{2}}} & \text{if $n$ is even}, \\[10pt]
\minimatrix{0}{2^{\lfloor \frac{n}{2} \rfloor -\lfloor \frac{k}{2} \rfloor}}{2^{\lfloor \frac{n}{2} \rfloor -\lfloor \frac{k}{2} \rfloor+1}}{0} & \text{if $n$ is odd}.
\end{cases}
\qedhere
\end{equation*}
\end{proof}

We can say more about cyclic semigroups; see Theorem \ref{Theorem:Cyclic}.


\begin{table}\small
\begin{equation*}
\begin{array}{c|ccc}
S  & A & \mdim S & \text{Remarks}\\[2pt]
\hline
\{0\} & [ \tfrac{1}{2} ] & 1 &  \text{Prop.~\ref{Proposition:DimOne}}\\[3pt]
\N & 0 & 1 &  \text{Prop.~\ref{Proposition:DimOne}}\\[3pt]
\semigroup{m} = m\N & \minimatrix{1}{ \frac{1}{m} }{0}{1} & 2 & \text{Prop.~\ref{Proposition:DimTwo}}\\[10pt]
\text{$\semigroup{2,k}$, $k \geq 3$ odd} & \minimatrix{0}{2^{-j}}{2^{j+1}}{0}& 2 & \text{Prop.~\ref{Proposition:DimTwo}}\\[10pt]
 \{0\} \cup \{n,n+1,\ldots\} & \minimatrix{2}{2^{-n}}{0}{0} & 2 &\text{Prop.~\ref{Proposition:DimTwo}}\\[10pt]
\semigroup{3,5,7} & \left[\begin{smallmatrix}
        \frac{-1}{4} & \frac{19}{16}\\ -3 & \frac{-7}{4}
    \end{smallmatrix}\right] & 2 & \text{Rem.~\ref{Remark:357}}\\[12pt] 
\semigroup{3,4} &  \left[\begin{smallmatrix}
        \frac{-9}{4} & \frac{-5}{4} & \frac{1}{8}\\
        -3 & 0 & \frac{-5}{2}\\
        \frac{-1}{2} & \frac{3}{2} & \frac{-7}{4}\\
    \end{smallmatrix} \right] & 3 & \text{Ex.~\ref{Example:34}}\\[15pt]
\semigroup{4,6,17}
&
\left[
\begin{smallmatrix}
 \frac{1563}{16} & -\frac{6933}{16} & -\frac{2295}{16} & \frac{1277}{16} \\
 \frac{393}{8} & -\frac{2759}{8} & -\frac{1085}{8} & \frac{575}{8} \\
 -\frac{1939}{8} & \frac{2975}{16} & -\frac{355}{4} & \frac{97}{4} \\
 -\frac{4605}{16} & -\frac{8063}{8} & -\frac{11517}{16} & \frac{5375}{16} \\
\end{smallmatrix}
\right]
& 4 & \text{Thm.~\ref{Theorem:EmbeddingSharp}}\\[25pt]
\semigroup{5,33,52}& \tiny \left[\begin{smallmatrix}
 -\frac{8589934595}{8192} & \frac{2147467265}{2048} & -\frac{17179738119}{8192} & \frac{16383}{2048} & \frac{65533}{8192} \\[2pt]
 -\frac{5368512511}{1024} & \frac{10737041407}{2048} & -\frac{21474279423}{2048} & \frac{188417}{1024} & \frac{188417}{1024} \\[2pt]
 -\frac{4294770687}{2048} & \frac{8589574143}{4096} & -\frac{17179344895}{4096} & \frac{180225}{2048} & \frac{180225}{2048} \\[2pt]
 -\frac{524289}{4096} & \frac{851969}{8192} & -\frac{1179649}{8192} & -\frac{425985}{4096} & -\frac{425985}{4096} \\[2pt]
 -\frac{8588361721}{8192} & \frac{4294311933}{4096} & -\frac{17178034167}{8192} & \frac{163841}{1024} & \frac{1310727}{8192} \\
    \end{smallmatrix}\right] 
    & 5 & \text{Rem.~\ref{Remark:53352}}\\[30pt]
\semigroup{5,7} & 
\left[
\begin{smallmatrix}
 -\frac{767}{16} & \frac{2937}{16} & -\frac{893}{16} & \frac{383}{8} & \frac{767}{16} \\
 -\frac{161}{4} & \frac{1029}{8} & -\frac{321}{8} & \frac{321}{8} & \frac{129}{4} \\
 -\frac{515}{8} & \frac{3599}{16} & -\frac{1027}{16} & \frac{1027}{16} & \frac{515}{8} \\
 -\frac{255}{8} & \frac{2427}{16} & -\frac{639}{16} & \frac{511}{16} & \frac{383}{8} \\
 \frac{1031}{16} & -\frac{1611}{8} & \frac{515}{8} & -\frac{1029}{16} & -\frac{775}{16} \\
\end{smallmatrix}
\right]
& 5 & \text{Cor.~\ref{Corollary:TwoGenerator}} \\[25pt]
\semigroup{6,9,20} &  \eqref{eq:NuggetMatrix} & 6 & \text{Ex.~\ref{Example:Nugget}}\\
\end{array}
\end{equation*}
\caption{Semigroups $S$ and $A \in \M_d(\Q)$ such that $\S(A) = S$, with references that justify the exact value of the matricial dimension.  Empirically, computer searches among matrices whose entries have denominators that are powers of $2$ yield many nontrivial examples.}
\label{Table:Examples}
\end{table}

In what follows, $\otimes$ denotes the Kronecker product of matrices \cite[Sec.~3.4]{SCLA}.

\begin{theorem}\label{Theorem:Properties}
Let $A$ and $B$ be rational matrices.
\begin{enumerate}[leftmargin=*]
\item $\S(A\oplus B) = \S(A) \cap \S(B)$.
\item $\S(A) \cap \S(B) \subseteq \S(A \otimes  B)$.
\item $\S(\frac{1}{m} A) \subseteq \S(A) \subseteq \S(mA)$ for $m \in \N \setminus \{0\}$.
\item $\S(A) \subseteq \S(A^m)$ for $m \in \N$.
\item If $\ell | \gcd \S(A)$, then $\ell \S(A^{\ell}) = \S(A)$.
\item If $AB=BA$, then $\S(A) \cap \S(B) \subseteq \S(AB)$.
\item $\S(A) \S(B) \subseteq \S(A \otimes B)$.
\item $\S(A^{\mathsf{T}}) = \S(A)$.
\end{enumerate}
\end{theorem}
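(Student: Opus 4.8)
The plan is to derive every part from a single principle: the $n$-th power map is multiplicative with respect to the matrix operation in question, and $\M_d(\Z)$ is closed under matrix products, integer scalar multiples, direct sums, and Kronecker products. None of (a)--(d), (f), (g) requires anything beyond applying the appropriate power identity and reading off integrality.

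Parts (a), (b), and (f) follow from the power identities $(A\oplus B)^n = A^n \oplus B^n$, $(A\otimes B)^n = A^n \otimes B^n$, and (when $AB=BA$) $(AB)^n = A^n B^n$. For (a), $(A\oplus B)^n$ is integral exactly when both diagonal blocks $A^n$ and $B^n$ are, so the set equality is immediate. For (b) and (f), if $n$ lies in both $\S(A)$ and $\S(B)$ then $A^n$ and $B^n$ are integral, hence so is their Kronecker product (resp.\ ordinary product); these are only inclusions because the combined matrix can clear denominators that the individual factors do not. Parts (c) and (d) follow from $(cA)^n = c^n A^n$ and $A^{mn}=(A^n)^m$: integrality of $\tfrac{1}{m^n}A^n$ forces integrality of $A^n$ after multiplying by $m^n$, while integrality of $A^n$ is preserved both under multiplication by $m^n$ and under taking $m$-th powers.

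Part (e) is the one step needing more than bookkeeping. The key is the reindexing identity
\[
\S(A^\ell) = \{k \in \N : A^{\ell k} \in \M_d(\Z)\} = \{k \in \N : \ell k \in \S(A)\},
\]
whence $\ell\,\S(A^\ell) = \S(A) \cap \ell\N$. The hypothesis $\ell \mid \gcd \S(A)$ says every element of $\S(A)$ is divisible by $\ell$, i.e.\ $\S(A) \subseteq \ell\N$, so the intersection collapses to $\S(A)$ and the claimed equality follows.

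Finally, I would deduce (g) from (b). Reading $\S(A)\S(B)$ as the set of products $\{jk : j\in\S(A),\ k\in\S(B)\}$, I note that $\S(A)$, being a subsemigroup of $(\N,+)$ containing $0$, is closed under multiplication by nonnegative integers; hence $jk$ is a multiple of $j$ lying in $\S(A)$ and simultaneously a multiple of $k$ lying in $\S(B)$, so $\S(A)\S(B) \subseteq \S(A)\cap\S(B) \subseteq \S(A\otimes B)$ by part (b). (Alternatively, $A^{jk}=(A^j)^k$ and $B^{jk}=(B^k)^j$ are both integral, so $(A\otimes B)^{jk}$ is integral directly.) The only genuinely substantive step anywhere is the reindexing in (e) together with the use of the gcd hypothesis to remove the intersection with $\ell\N$; the remaining work is just keeping straight which operation the power map distributes over.
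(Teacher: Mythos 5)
Your proposal is correct and follows essentially the same route as the paper's proof: each part reduces to the appropriate power identity ($(A\oplus B)^n=A^n\oplus B^n$, $(A\otimes B)^n=A^n\otimes B^n$, $(cA)^n=c^nA^n$, $(A^m)^n=(A^n)^m$, and $(AB)^n=A^nB^n$ under commutativity), and your treatment of (e) via the reindexing $\ell\,\S(A^\ell)=\S(A)\cap\ell\N$ is just a repackaging of the paper's two-sided containment check. The only cosmetic difference is in (g), where you deduce the claim from (b) using the observation $\S(A)\S(B)\subseteq\S(A)\cap\S(B)$ (valid since a subsemigroup of $(\N,+)$ is closed under $\N$-multiples), whereas the paper computes $(A\otimes B)^{st}=(A^s)^t\otimes(B^t)^s$ directly --- an alternative you also note.
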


\begin{proof}
\noindent(a) This holds because $(A \oplus B)^n = A^n \oplus B^n$ for $n \in \N$.

\medskip\noindent(b) This holds because $(A \otimes B)^n = A^n \otimes B^n$ for $n \in \N$.
Note that $(A \otimes B)^n$ may be integral even if $A$ and $B$ are not.  For example, consider
$A = [\frac{2}{3}]$ and $B = [\frac{3}{2}]$.

\medskip\noindent(c) If $A^n \in \M_d(\Z)$, then $(mA)^n = m^n A^n \in \M_d(\Z)$.
Similarly, if $(A/m)^n \in \M_d(\Z)$, then $A^n \in \M_d(\Z)$.

\medskip\noindent(d) If $A^n \in \M_d(\Z)$, then $(A^m)^n = (A^n)^m \in \M_d(\Z)$.

\medskip\noindent(e) If $n \in \ell \S(A^{\ell})$, then $n = \ell i$ for some $i\in \S(A^{\ell})$.
Thus, $A^n = A^{\ell i} = (A^{\ell})^i \in \M_d(\Z)$, so $n \in \S(A)$.
Conversely, if $n \in \S(A)$, then $\ell | n$, so $n = \ell (n/\ell)$ in which
$(A^{\ell})^{n/\ell} = A^n \in \M_d(\Z)$.  Thus, $n \in \ell \S(A^{\ell})$.

\medskip\noindent(f) If $AB = BA$ and $A^n,B^n \in \M_d(\Z)$, then $(AB)^n = A^n B^n \in \M_d(\Z)$.

\medskip\noindent(g) If $s \in \S(A)$ and $t \in \S(B)$, then 
$(A\otimes B)^{st} = (A^s)^t \otimes (B^t)^s$ is integral.  Alternately,
$st \in \S(A) \cap \S(B)$, so this follows from (b).

\medskip\noindent(h) A matrix is integral if and only if its transpose is.
\end{proof}

Let $\SL_d^{\pm}(\Z)$ denote the set of $d \times d$ integer matrices with determinant $\pm 1$.
Then $\SL_d^{\pm}(\Z)$ is closed under inversion since $A (\adj A) = (\adj A) A = (\det A )I$,
in which $\adj A$ denotes the adjugate of a square matrix $A$ \cite[(C.4.3)]{SCLA}.

\begin{proposition}
Let $A,B \in \M_d(\Q)$ and $A = QBQ^{-1}$, in which $Q \in \SL^{\pm}_d(\Z)$.  Then $\S(A) = \S(B)$.
\end{proposition}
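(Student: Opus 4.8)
The plan is to exploit the fact, noted immediately before the statement, that $\SL_d^{\pm}(\Z)$ is closed under inversion: since $S \in \SL_d^{\pm}(\Z)$, the identity $S(\adj S) = (\adj S)S = (\det S)I = \pm I$ shows that $S^{-1} = \pm \adj S \in \M_d(\Z)$ as well. Thus \emph{both} $S$ and $S^{-1}$ are integer matrices, and this two-sided integrality is the entire content of the argument.

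First I would observe that conjugation commutes with taking powers: because $A = SBS^{-1}$, a telescoping product gives $A^n = (SBS^{-1})^n = S B^n S^{-1}$ for every $n \in \N$, and symmetrically $B^n = S^{-1} A^n S$. Next I would invoke the closure of $\M_d(\Z)$ under matrix multiplication. If $n \in \S(B)$, then $B^n \in \M_d(\Z)$, so $A^n = S B^n S^{-1}$ is a product of three integer matrices and hence lies in $\M_d(\Z)$; this gives $n \in \S(A)$, so $\S(B) \subseteq \S(A)$. Running the identical reasoning with $A$ and $B$ interchanged — and using that $S^{-1}$ is integral — shows that $n \in \S(A)$ forces $B^n = S^{-1} A^n S \in \M_d(\Z)$, whence $\S(A) \subseteq \S(B)$. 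Combining the two inclusions yields $\S(A) = \S(B)$.

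There is no real obstacle here; the only point requiring care is that the conclusion genuinely needs $S^{-1} \in \M_d(\Z)$, not merely $S \in \M_d(\Z)$. An arbitrary integral similarity need not preserve $\S(\cdot)$, since conjugating by an integer matrix with $|\det S| > 1$ can introduce denominators through $S^{-1}$; it is precisely the hypothesis $\det S = \pm 1$ that makes the similarity invertible over $\Z$ and hence makes both inclusions symmetric. I would flag this as the conceptual content worth emphasizing, even though the computation itself is immediate.
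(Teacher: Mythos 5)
Your proof is correct and follows essentially the same route as the paper's: establish $S^{-1} = \pm\adj S \in \M_d(\Z)$, use $A^n = SB^nS^{-1}$ to get $\S(B) \subseteq \S(A)$, and obtain the reverse inclusion by symmetry. Your closing remark about why $\det S = \pm 1$ is essential is a nice observation, but the argument itself matches the paper's.
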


\begin{proof}
Let $Q \in \SL^{\pm}_d(\Z)$.  Then $\adj Q \in \M_d(\Z)$ and $Q(\adj Q) = (\adj Q) Q = (\det Q)I = \pm I$, so $Q^{-1} \in \SL_d^{\pm}(\Z)$.
If $n \in \S(B)$, then $A^n = QB^n Q^{-1} \in \M_d(\Z)$, so $n \in \S(A)$.  Thus, $\S(B) \subseteq \S(A)$.
Symmetry provides the reverse containment.
\end{proof}

The converse of the previous proposition is false since $\S(0) = \S(I) = \N$, but 
$0$ and $I$ are not similar (over $ \SL^{\pm}_d(\Z)$ or otherwise).

\section{Power-integral matrices}\label{Section:PowerIntegral}
The next result places severe restrictions on rational matrices that generate nontrivial exponent semigroups.
We first need a few facts from algebraic number theory.
An \emph{algebraic integer} is a complex number that is a root of a monic polynomial with integer coefficients \cite[p.~43]{Stewart}.
For example, $i$,  $\sqrt{2}$, and $47$ are algebraic integers, being zeros of 
$x^2+1$, $x^2-2$, and $x-47$, respectively.
Let $\Z[x]$ (respectively, $\Q[x]$) denote the ring of polynomials with coefficients in $\Z$ (respectively, $\Q$).
The characteristic polynomial of $A \in \M_d(\Q)$ is the monic, degree-$d$ polynomial $p_A(x) = \det (xI - A) \in \Q[x]$.

\begin{theorem}\label{Theorem:CharPoly}
Let $A \in \M_d(\Q)$.  If $\S(A) \neq \{0\}$, then $p_A(x) \in \Z[x]$.
\end{theorem}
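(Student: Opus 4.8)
The plan is to reduce the claim to two standard facts about algebraic integers: they form a subring of $\C$ that is integrally closed, and the only algebraic integers lying in $\Q$ are the ordinary integers. By hypothesis $\S(A) \neq \{0\}$, so there is some $n \geq 1$ with $A^n \in \M_d(\Z)$, and I would work from this single integral power.

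First I would show that every eigenvalue of $A$ is an algebraic integer. Let $\lambda_1, \ldots, \lambda_d \in \C$ be the eigenvalues of $A$, listed with multiplicity, so that the eigenvalues of $A^n$ are exactly $\lambda_1^n, \ldots, \lambda_d^n$. Since $A^n \in \M_d(\Z)$, its characteristic polynomial $p_{A^n}$ is monic with integer coefficients, and hence each $\lambda_i^n$ is an algebraic integer. Now each $\lambda_i$ is a root of the monic polynomial $x^n - \lambda_i^n$, whose coefficients are algebraic integers; because the algebraic integers are integrally closed in $\C$, this forces each $\lambda_i$ to be an algebraic integer.

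Finally I would pass from the eigenvalues back to $p_A$. The coefficients of $p_A(x) = \prod_{i=1}^d (x - \lambda_i)$ are, up to sign, the elementary symmetric functions of $\lambda_1, \ldots, \lambda_d$, hence algebraic integers, since the algebraic integers are closed under addition and multiplication. On the other hand $A \in \M_d(\Q)$ gives $p_A(x) \in \Q[x]$, so each coefficient is simultaneously rational and an algebraic integer, and therefore lies in $\Z$. This yields $p_A(x) \in \Z[x]$, as desired.

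The only delicate point is the assertion that an $n$-th root of an algebraic integer is again an algebraic integer. This is not a matter of extracting roots explicitly: it is simply the observation that $\lambda_i$ satisfies the monic relation $x^n - \lambda_i^n = 0$ with algebraic integer coefficients, so integral closedness does the work. Everything else is routine bookkeeping with symmetric functions together with the characterization that a rational algebraic integer is an integer.
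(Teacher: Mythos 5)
Your argument is correct and follows essentially the same route as the paper's proof: extract a single integral power $A^n$, deduce that the eigenvalues of $A$ are algebraic integers because their $n$-th powers are, and then conclude via symmetric functions and the fact that rational algebraic integers are integers. The only cosmetic difference is that you justify the $n$-th-root step directly through integral closedness (via $x^n - \lambda_i^n$) rather than citing it as a known theorem, which is exactly the standard proof of the fact the paper cites.
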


\begin{proof}
Suppose $A^n \in \M_d(\Z)$ for some $n \geq 1$.  
Since $p_{A^n}(x) \in \Z[x]$, each eigenvalue of $A^n$ is an algebraic integer.
The eigenvalues of $A^n$ are the $n$th powers of the eigenvalues of $A$,
repeated according to multiplicity \cite[Cor.~11.1.4]{SCLA}.
Since the $n$th root of an algebraic integer
is an algebraic integer \cite[Thm.~2.10]{Stewart}, the eigenvalues of $A$ are algebraic integers.
The coefficients of $p_A(x) \in \Q[x]$ are sums and products of the eigenvalues of $A$, so
they are algebraic integers \cite[Thm.~2.9]{Stewart}, and hence, because they are also rational numbers, integers (in the standard sense) \cite[Lem.~2.14]{Stewart}.   Thus, $p_A(x) \in \Z[x]$.
\end{proof}

\begin{example}
The converse of the previous theorem is false:
$A = \big[\begin{smallmatrix} 2 & \frac{1}{2} \\ 0 & 1 \end{smallmatrix}\big] \in \M_2(\Q)$ satisfies
$p_A(x) = (x-2)(x-1) \in \Z[x]$ and $\S(A) = \{0\}$ since
\begin{equation*}
A^n =  \begin{bmatrix} 2^n & \frac{2^n-1}{2} \\[2pt] 0 & 1 \end{bmatrix} \notin \M_2(\Z)
\quad \text{for any $n \geq 1$}.
\end{equation*}
\end{example}

We say $A \in \M_d(\Q)$ is \emph{power integral} if $A^n \in \M_d(\Z)$ for some $n \geq 1$.
Clearly, $A$ is power integral if and only if $\S(A) \neq \{0\}$.  
Much of the next result is mathematical folklore and portions are well known in various circles.
Recall that the minimal polynomial $m_A(x)\in \Q[x]$ of $A \in \M_d(\Q)$ is the monic polynomial of least
positive degree that annihilates $A$.  Below $\tr(\cdot)$ refers to the trace of the given matrix.

\begin{theorem}\label{Theorem:TFAE}
Let $A \in \M_d(\Q)$.  The following are equivalent.
\begin{enumerate}[leftmargin=*]
\item $p_A(x) \in \Z[x]$.
\item $m_A(x) \in \Z[x]$.
\item Every eigenvalue of $A$ is algebraic integer.
\item There exists an integer $m \geq 1$ such that $mA^n \in \M_d(\Z)$ for all $n \geq 0$.
\item $A = QBQ^{-1}$, in which $Q \in \M_d(\Z)$ is invertible and $B \in \M_d(\Z)$.
\item $\tr (A^k) \in \Z$ for all $k \in \N$.
\end{enumerate}
\end{theorem}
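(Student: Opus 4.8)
The plan is to prove a collection of implications that makes the six conditions strongly connected, handling the purely algebraic conditions (a), (b), (c) together and then attaching (d), (e), (f). First I would dispatch the block (a) $\Leftrightarrow$ (b) $\Leftrightarrow$ (c). For (a) $\Rightarrow$ (b), note that $m_A$ is a monic factor of $p_A$ in $\Q[x]$ (Cayley--Hamilton), so Gauss's lemma forces $m_A \in \Z[x]$ once $p_A \in \Z[x]$. For (b) $\Rightarrow$ (c), the roots of the monic integer polynomial $m_A$ are exactly the eigenvalues of $A$, hence algebraic integers. For (c) $\Rightarrow$ (a), the coefficients of $p_A$ are, up to sign, the elementary symmetric functions of the eigenvalues, hence algebraic integers; being rational, they are integers. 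This last step is precisely the argument already carried out in the proof of Theorem \ref{Theorem:CharPoly}.

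Next I would record the easy outward implications from this block. For (c) $\Rightarrow$ (f), $\tr(A^k) = \sum_i \lambda_i^k$ is a sum of algebraic integers lying in $\Q$, hence in $\Z$. For (a) $\Rightarrow$ (e), I would use the $\Z$-module $M = \sum_{n \geq 0} A^n \Z^d$: Cayley--Hamilton together with $p_A \in \Z[x]$ shows $M = \Z^d + A\Z^d + \cdots + A^{d-1}\Z^d$ is finitely generated, and it is an $A$-invariant lattice of full rank since it contains $\Z^d$. Choosing $N \in \N$ with $NM \subseteq \Z^d$, the lattice $L = NM \subseteq \Z^d$ is still $A$-invariant of full rank, and taking $S \in \M_d(\Z)$ to be the matrix whose columns form a $\Z$-basis of $L$ realizes $A = SBS^{-1}$ with $B \in \M_d(\Z)$. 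For (e) $\Rightarrow$ (d), write $A^n = SB^nS^{-1}$ and clear denominators using $\adj S = (\det S)S^{-1} \in \M_d(\Z)$, so that $m = |\det S|$ satisfies $mA^n = S B^n \adj S \in \M_d(\Z)$ for all $n$.

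The implication (d) $\Rightarrow$ (c) then feeds back into the algebraic block: fixing an eigenvalue $\lambda$, the matrix $mA^n \in \M_d(\Z)$ has $m\lambda^n$ among its eigenvalues, so each $m\lambda^n$ is an algebraic integer; hence for every finite place $\p$ of $\Q(\lambda)$ we have $v_{\p}(m) + n\,v_{\p}(\lambda) \geq 0$ for all $n \geq 0$, which forces $v_{\p}(\lambda) \geq 0$ and so $\lambda$ is an algebraic integer. At this point (a)$\Leftrightarrow$(b)$\Leftrightarrow$(c), (c)$\Rightarrow$(f), and (a)$\Rightarrow$(e)$\Rightarrow$(d)$\Rightarrow$(c)$\Rightarrow$(a), so the only remaining task is a single implication returning (f) into the block.

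I expect (f) $\Rightarrow$ (c) to be the main obstacle, precisely because the trace records only the power sums $p_k = \sum_i \lambda_i^k$, and recovering integrality of the elementary symmetric functions from them is delicate: Newton's identities introduce a factor of $k$ and inductively give only $k\,e_k \in \Z$, not $e_k \in \Z$. I would sidestep this by forming the rational generating function $R(t) = \sum_i (1 - \lambda_i t)^{-1} = \sum_{k \geq 0} p_k t^k$, which lies in $\Z[[t]]$ by (f), and invoking Fatou's lemma: a power series in $\Z[[t]]$ that equals a rational function $N/D$ with $\gcd(N,D) = 1$ and $D(0) = 1$ must have $N, D \in \Z[t]$. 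The reduced denominator of $R$ is $\prod_{\lambda \neq 0 \text{ distinct}} (1 - \lambda t)$, and its membership in $\Z[t]$ is exactly the assertion that the distinct nonzero eigenvalues are algebraic integers; since $0$ is trivially an algebraic integer, (c) follows. One could instead argue place-by-place, isolating the eigenvalues of minimal $\p$-adic valuation and estimating $v_{\p}(p_k)$ as $k \to \infty$, but the possible cancellation among those leading terms is what makes that route technical, so the Fatou approach is cleaner. Combining all of the above yields the full equivalence.
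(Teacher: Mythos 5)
Your proposal is correct, and the implication graph you build ((a)$\Leftrightarrow$(b)$\Leftrightarrow$(c), (c)$\Rightarrow$(f), (a)$\Rightarrow$(e)$\Rightarrow$(d)$\Rightarrow$(c), (f)$\Rightarrow$(c)) is strongly connected, so the equivalence follows. The main divergence from the paper is in the hardest step, (f)$\Rightarrow$(c). The paper works place-by-place in $\K=\Q(\lambda_1,\ldots,\lambda_d)$: it isolates the eigenvalues of negative $\p$-valuation, shows their power sums lie in the valuation ring, expresses $n!\,e_n$ of their $k$th powers via complete Bell polynomials in those power sums, and derives a contradiction from the unboundedness of $v_\p\bigl(n!(\lambda_1\cdots\lambda_n)^k\bigr)$ as $k\to\infty$. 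Your route instead packages the traces into $R(t)=\sum_k \tr(A^k)\,t^k\in\Z[[t]]\cap\Q(t)$ and invokes Fatou's lemma to conclude that the reduced denominator $\prod_j(1-\mu_j t)$ over the distinct nonzero eigenvalues lies in $\Z[t]$; this is clean and correct (the numerator is nonzero at each $t=1/\mu_j$ because the multiplicities are positive integers, so the fraction really is reduced), at the cost of importing Fatou's theorem as a black box where the paper's argument is self-contained modulo standard symmetric-function identities and valuation theory. Your other reroutings are sound but slightly less economical than the paper's: you prove (a)$\Rightarrow$(e) directly via the forward-orbit lattice $\Z^d+A\Z^d+\cdots+A^{d-1}\Z^d$ (the paper goes (a)$\Rightarrow$(d)$\Rightarrow$(e) using the stable sublattice $\{\vec{x}\in\Z^d: A^n\vec{x}\in\Z^d\ \forall n\}$ — same idea, dual lattice), and your valuation argument for (d)$\Rightarrow$(c) is correct but could be replaced by the one-line observation that (e) gives $p_A=p_B\in\Z[x]$ by similarity, which is what the paper uses to close its cycle.
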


\begin{proof}
\medskip\noindent(a) $\Rightarrow$ (b)
Since $A \in \M_d(\Q)$, the minimal polynomial $m_A(x)$ has coefficients in $\Q$ and divides $p_A(x)$ in $\Q[x]$.  
Since both polynomials are monic and $p_A(x) \in \Z[x]$ by hypothesis, Gauss' lemma ensures that $m_A(x) \in \Z[x]$ \cite[Lem., p.14]{Marcus}.

\medskip\noindent(b) $\Rightarrow$ (c)
Every eigenvalue of $A$ is a zero of $m_A(x)$ \cite[Thm.~11.3.1.d]{SCLA}.  Since $m_A(x) \in \Z[x]$ is monic,
every eigenvalue of $A$ is an algebraic integer.

\medskip\noindent(c) $\Rightarrow$ (a) 
The hypothesis ensures the zeros of $p_A(x) \in \Q[x]$ are algebraic integers.
Sums and products of algebraic integers are algebraic integers \cite[Thm.~2.9]{Stewart}.
The coefficients of $p_A(x)$ are elementary symmetric polynomials of the zeros of $p_A(x)$, so they are algebraic integers.
Since rational algebraic integers are integers in the usual sense \cite[Lem.~2.14]{Stewart}, we conclude $p_A(x) \in \Z[x]$.

\medskip\noindent(a) $\Rightarrow$ (d) 
Suppose $p_A(x) = x^d + c_{d-1} x^{d-1} + \cdots + c_1 x + c_0 \in \Z[x]$.
Let $m \geq 1$ be an integer such that $mA^i \in \M_d(\Z)$ for $i=1,2,\ldots,d-1$.
The Cayley--Hamilton theorem \cite[Thm.~11.2.1]{SCLA}  ensures that $p_A(A) = 0$, so
$A^{d+i} = -c_{d-1} A^{d+i-1} - \cdots - c_1 A^{1+i} - c_0 A^i$
for $i \geq 0$.  Induction confirms that $mA^n \in \M_d(\Z)$ for all $n \geq 0$.

\medskip\noindent(d) $\Rightarrow$ (e) Suppose $mA^n \in \M_d(\Z)$ for all $n\geq 0$. Then 
\begin{equation*}
\Lambda = \{ \vec{x} \in \Z^d : \text{$A^n\vec{x} \subseteq \Z^d$ for all $n \geq 0$}\}
\end{equation*}
is a full-rank sublattice of $\Z^d$ since $m \Z^d \subseteq \Lambda \subseteq \Z^d$.
Let the columns of $Q \in \M_d(\Z)$ comprise a (lattice) basis of $\Lambda$.
Since $A \Lambda \subseteq \Lambda$, there is a $B \in \M_d(\Z)$ such that
$AQ = QB$; that is, $A = QBQ^{-1}$.

\medskip\noindent(e) $\Rightarrow$ (a) Since $A$ and $B$ are similar, $p_A(x) = p_B(x) \in \Z[x]$ \cite[Thm.~10.3.1]{SCLA}.

\medskip\noindent(e) $\Rightarrow$ (f) Since $A$ and $B$ are similar and $B \in \M_d(\Z)$, we have
$\tr(A^k) = \tr(B^k) \in \Z$ for all $k \in \N$ \cite[Thm.~2.6.8]{SCLA}.

\medskip\noindent(f) $\Rightarrow$ (c) 
Suppose toward a contradiction that not every eigenvalue of $A \in \M_d(\Q)$ is an algebraic integer.
Let $\lambda_1,\lambda_2,\ldots,\lambda_d$ denote the eigenvalues of $A$, repeated according to multiplicity and
let $\K = \Q(\lambda_1,\lambda_2,\ldots,\lambda_d)$.  Let $\p$ be a prime ideal of $\K$ with valuation ring $\P$
and let $\lambda_1, \lambda_2, \ldots, \lambda_n$, in which $0 \leq n \leq d$, denote the eigenvalues of $A$ that
do not belong to $\P$; that is, that have negative $\p$-valuation. We may assume $n \geq 1$ since otherwise there is nothing to prove.
Since $\tr(A^k) \in \Z \subseteq \P$ and $\lambda_{n+1},\lambda_{n+2},\ldots, \lambda_d \in \P$, it follows that
\begin{align*}
p_k 
= p_k(\lambda_1,\lambda_2,\ldots,\lambda_n) 
= \lambda_1^k + \lambda_2^k + \cdots + \lambda_n^k
= \tr (A^k) - (\lambda_{n+1}^k + \cdots + \lambda_d^k) ,
\end{align*}
the power-sum symmetric function of degree $k$ in $\lambda_1,\lambda_2,\ldots,\lambda_n$, belongs to $\P$.
The generating function for the elementary symmetric polynomials
\begin{equation*}
e_k = e_k(\lambda_1,\lambda_2,\ldots,\lambda_n) 
= \sum_{1 \leq i_1 < i_2 < \cdots < i_n \leq k} \lambda_{i_1} \lambda_{i_2} \cdots \lambda_{i_k}
\end{equation*}
is $E(t) = \sum_{k=0}^{n} e_k t^k = \prod_{i =1}^n (1 + \lambda_i t)$ \cite[(2.2)]{Macdonald}.
It is related to 
\begin{equation*}
P(t) = \sum_{k=1}^{\infty} p_k t^{k-1} = \sum_{i= 1}^n \frac{\lambda_1}{1- \lambda_i t} 
= \frac{d}{dt} \sum_{i = 1}^n \log \frac{1}{1-\lambda_i t},
\end{equation*}
the generating function
for the power-sum symmetric functions, via the formula $P(-t) = \frac{d}{dt} \log E(t)$ \cite[(2.10')]{Macdonald}.  
Since $E(0) = 1$ and $P(0) = 0$, we have
\begin{align*}
E(t) 
&= \exp\bigg( \int_0^t P(-s)\,ds \bigg)
= \exp\bigg( \sum_{r=1}^{\infty} (-1)^r p_r \frac{t^r}{r} \bigg) \\
&=1-p_1 t+\frac{1}{2} \left(p_1^2+p_2\right) t^2+\frac{1}{6} \left(-p_1^3-3 p_2 p_1-2 p_3\right) t^3+\cdots
\end{align*}
To be more specific,
\begin{equation*}
e_{k}={\frac {(-1)^{k}}{k!}}B_{k}\big(-p_{1},\, -1!p_{2},\, -2!p_{3},\ldots ,\, -(k-1)! p_{k} \big),
\end{equation*}
in which $B_n(x_1,x_2,\ldots,x_k) \in \Z[x_1,x_2,\ldots,x_k]$ is the complete exponential Bell polynomial.
Then for each $k \in \N$,
\begin{align*}
n! (\lambda_1 \lambda_2 \cdots \lambda_n)^k
&= n! (\lambda_1^k \lambda_2^k \cdots \lambda_n^k) 
=n! e_n(\lambda_1^k, \lambda_2^k ,  \ldots, \lambda_n^k) \\
&= (-1)^n  B_n\big(-p_{1},\, -1!p_{2},\, -2!p_{3},\ldots ,\, -(n-1)! p_{n} \big) 
\end{align*}
belongs to $\P$.
As $k\to\infty$, the $\p$-valuation of $n! (\lambda_1 \lambda_2 \cdots \lambda_n)^k$ is unbounded below, which contradicts the fact
that it belongs to $\P$.  Thus, every eigenvalue of $A$ belongs to $\P$.  Since $\p$ was an arbitrary prime ideal of $\K$,
we conclude that every eigenvalue of $A \in \M_d(\Q)$ is an algebraic integer.
\end{proof}

\begin{corollary}
Let $A \in \M_d(\Q)$.
If any of the following occur, then $\S(A) = \{0\}$:
$\det A \notin \Z$;
$\tr (A^k) \notin \Z$ for some $k \geq 1$;
or
$A$ has a rational, non-integer eigenvalue.
\end{corollary}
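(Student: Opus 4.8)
The plan is to argue by contraposition in each of the three cases, showing that each hypothesis forces the failure of one of the equivalent conditions in Theorem \ref{Theorem:TFAE}. Since conditions (a)--(f) there are all equivalent to (a), namely $p_A(x) \in \Z[x]$, and since Theorem \ref{Theorem:CharPoly} guarantees that $\S(A) \neq \{0\}$ implies $p_A(x) \in \Z[x]$, the failure of any one of them yields $p_A(x) \notin \Z[x]$ and hence $\S(A) = \{0\}$. Thus the whole corollary reduces to checking that each hypothesis rules out condition (a), (c), or (f).

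For the determinant condition, I would recall that $p_A(x) = \det(xI - A)$ has constant term $p_A(0) = \det(-A) = (-1)^d \det A$. Hence if $\det A \notin \Z$, then the constant coefficient of $p_A(x)$ is non-integral, so $p_A(x) \notin \Z[x]$; by Theorem \ref{Theorem:CharPoly} this gives $\S(A) = \{0\}$. The trace condition is even more immediate: the assertion that $\tr(A^k) \notin \Z$ for some $k \geq 1$ is exactly the negation of condition (f) of Theorem \ref{Theorem:TFAE}, so property (a) fails through the equivalence (f) $\Leftrightarrow$ (a), and Theorem \ref{Theorem:CharPoly} again forces $\S(A) = \{0\}$.

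For the eigenvalue condition, a rational non-integer eigenvalue $\lambda$ is a rational number that is not an algebraic integer, since a rational algebraic integer is an ordinary integer (Lemma 2.14 of \cite{Stewart}, already invoked in the proof of Theorem \ref{Theorem:CharPoly}). Consequently condition (c) of Theorem \ref{Theorem:TFAE} fails, and via (c) $\Leftrightarrow$ (a) we conclude $\S(A) = \{0\}$ once more.

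There is no real obstacle here: the substantive work has already been carried out in Theorems \ref{Theorem:CharPoly} and \ref{Theorem:TFAE} (in particular the nontrivial implication (f) $\Rightarrow$ (c)). The only points demanding the slightest care are recording the sign $(-1)^d$ in the constant term of $p_A(x)$ and recalling that a rational algebraic integer must be an ordinary integer, both of which are routine.
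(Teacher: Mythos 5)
Your proposal is correct and follows the same route as the paper: each hypothesis negates one of the equivalent conditions in Theorem \ref{Theorem:TFAE}, forcing $p_A(x) \notin \Z[x]$, and the contrapositive of Theorem \ref{Theorem:CharPoly} then gives $\S(A) = \{0\}$. You simply spell out the routine verifications (the constant term $(-1)^d \det A$, and that a rational algebraic integer is an ordinary integer) that the paper leaves implicit.
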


\begin{proof}
Theorem \ref{Theorem:TFAE} ensures that any of the conditions above imply $p_A(x) \notin \Z[x]$.
The contrapositive of Theorem \ref{Theorem:CharPoly} yields $\S(A) = \{0\}$.
\end{proof}

\section{Cyclic semigroups}\label{Section:Cyclic}

A semigroup $S \subseteq \N$ is \emph{cyclic} if it is singly generated.
Theorem \ref{Theorem:Cyclic} below provides a simple condition for cyclicity.
We first need an argument from \cite{PRM}.  

\begin{lemma}\label{Lemma:Pigeon}
Let $A = SBS^{-1}$, in which $B,S\in \M_d(\Z)$ with $\det B =\pm 1$ and $\det S \neq 0$.  
Then $A^n \in \M_d(\Z)$ for some $1 \leq n \leq (\det S)^{2d^2}$.
\end{lemma}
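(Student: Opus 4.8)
The plan is to show that a suitable power of $B$ is congruent to the identity modulo $\det S$, and then to transfer this to $A$ by clearing denominators through the adjugate. Since $\det B = \pm 1$, the adjugate identity $B(\adj B) = (\det B)I$ gives $B^{-1} = (\det B)\adj B \in \M_d(\Z)$, so $B$ is invertible over $\Z$ and, upon reduction modulo $m := |\det S|$, defines a \emph{unit} of the finite ring $\M_d(\Z/m\Z)$. The target is therefore an exponent $n$ with $B^n \equiv I \pmod m$.

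First I would record the reduction step: if $B^n \equiv I \pmod m$, write $B^n = I + mC$ with $C \in \M_d(\Z)$. Using $S^{-1} = \tfrac{1}{\det S}\adj S$ together with $m = \pm\det S$, I compute
\[
A^n = S B^n S^{-1} = I + m\, S C S^{-1} = I + \tfrac{m}{\det S}\, S C (\adj S) = I \pm S C (\adj S),
\]
which lies in $\M_d(\Z)$ because $S$, $C$, and $\adj S$ are all integer matrices. Thus \emph{any} $n$ with $B^n \equiv I \pmod m$ already forces $A^n \in \M_d(\Z)$, and it remains only to produce such an $n$ within the claimed range.

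For that I would invoke the pigeonhole principle inside the finite set $\M_d(\Z/m\Z)$, which has exactly $m^{d^2}$ elements. Among the $m^{d^2}+1$ reductions of $B^0, B^1, \ldots, B^{m^{d^2}}$, two must coincide, say $B^i \equiv B^j \pmod m$ with $0 \le i < j \le m^{d^2}$. Since $B$ is invertible modulo $m$, I may cancel $B^i$ to obtain $B^{\,j-i} \equiv I \pmod m$. Setting $n = j - i$ then gives $1 \le n \le m^{d^2} = |\det S|^{d^2} \le (\det S)^{2d^2}$, the last inequality holding because $|\det S| \ge 1$; this completes the argument.

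Every step is routine, and the only point requiring care is the denominator-clearing identity above, where one must notice that multiplying $S C S^{-1}$ by $m = |\det S|$ exactly cancels the $\det S$ appearing in $S^{-1} = \tfrac{1}{\det S}\adj S$, leaving an integer matrix. I would also remark that this reasoning in fact yields the sharper bound $|\det S|^{d^2}$ (indeed $n$ can be taken to be the order of $B$ in $\GL_d(\Z/m\Z)$), so the stated $(\det S)^{2d^2}$ is comfortably large; the even exponent $2d^2$ merely guarantees a positive bound irrespective of the sign of $\det S$.
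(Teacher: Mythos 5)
Your proof is correct, and it is a genuine (if modest) variation on the paper's argument. Both proofs are pigeonhole arguments, but they pigeonhole different objects: the paper works with the scaled powers $mA^k = SB^k(\adj S)$, which lie in $\M_d(\Z)$, and finds a collision modulo $m^2$ in the ring $\M_d(\Z/m^2\Z)$ of size $m^{2d^2}$, then cancels using $mA^{-k} \in \M_d(\Z)$; you instead work directly with $B$, observe that $\det B = \pm 1$ makes $B$ a unit in $\M_d(\Z/m\Z)$, find a collision there, and transfer the resulting congruence $B^n \equiv I \pmod{m}$ back to $A$ via the identity $A^n = I \pm SC(\adj S)$. Your route is cleaner in two respects: the denominator-clearing happens once, at the very end, rather than being carried through the pigeonhole step; and because you reduce modulo $m$ rather than $m^2$, you obtain the sharper bound $n \leq |\det S|^{d^2}$ (indeed $n$ can be taken to be the order of $B$ in $\GL_d(\Z/m\Z)$), of which the stated $(\det S)^{2d^2}$ is a weakening. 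The paper's version has the mild advantage of never needing to name $B$ again after the first line, since everything is phrased in terms of $mA^k$, but the two arguments are otherwise of the same depth and both are complete.
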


\begin{proof}
Let $m = \det S$.  Then $mS^{-1} = \adj S$, so
$m A^k = SB^k (\adj S) \in \M_d(\Z)$ for all $k \in \Z$.
There are $(m^2)^{d^2} = m^{2d^2}$ elements in $\M_d(\Z/m^2\Z)$, so the
pigeonhole principle yields integers $1 \leq k < j \leq m^{2d^2}$ such that 
$mA^j \equiv mA^k \pmod{m^2}$.
Thus, there is an $X \in \M_d(\Z)$ such that
$mA^j= mA^k + m^2 X$.
Therefore,
$A^{j-k} = I + (mA^{-k})X \in \M_d(\Z)$,
in which $1 \leq j-k \leq m^{2d^2}$.
\end{proof}

\begin{theorem}\label{Theorem:Cyclic}
Let $A \in \M_d(\Q)$ and $\det A = \pm 1$. Then $\S(A)$ is cyclic.
Moreover, $\S(A)$ is nontrivial if and only if $p_A(x) \in \Z[x]$.
\end{theorem}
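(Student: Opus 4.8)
My plan is to treat the two assertions separately, drawing on Theorem \ref{Theorem:CharPoly}, Theorem \ref{Theorem:TFAE}, and Lemma \ref{Lemma:Pigeon}. For the cyclicity statement, I would show that if $\S(A)$ is nontrivial then it equals $m\N = \semigroup{m}$, where $m$ is the least positive element of $\S(A)$; the remaining case $\S(A) = \{0\} = \semigroup{0}$ is cyclic by definition. For the ``moreover'' clause, the forward implication is a one-line appeal to Theorem \ref{Theorem:CharPoly}, and the reverse implication should follow by converting the integrality of $p_A$ into a conjugacy-to-integer-matrix statement and then invoking the pigeonhole bound of Lemma \ref{Lemma:Pigeon}.

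To carry out the cyclicity argument, I would set $m = \min(\S(A) \setminus \{0\})$, so that $A^m \in \M_d(\Z)$. The crucial use of the hypothesis is that $\det(A^m) = (\det A)^m = \pm 1$, so $A^m \in \SL_d^{\pm}(\Z)$; since that set is closed under inversion, $A^{-m} = (A^m)^{-1} \in \M_d(\Z)$ as well. Given any $n \in \S(A)$, I would write $n = qm + r$ with $0 \leq r < m$ via the division algorithm and observe that $A^r = (A^{-m})^q A^n$ is a product of integer matrices, so $r \in \S(A)$. Minimality of $m$ forces $r = 0$, hence $m \mid n$; the reverse inclusion $m\N \subseteq \S(A)$ is immediate from $A^{qm} = (A^m)^q \in \M_d(\Z)$. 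This gives $\S(A) = m\N = \semigroup{m}$, which is cyclic.

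For the equivalence, if $\S(A)$ is nontrivial then $\S(A) \neq \{0\}$ and Theorem \ref{Theorem:CharPoly} gives $p_A(x) \in \Z[x]$. Conversely, assuming $p_A(x) \in \Z[x]$, I would apply the implication (a) $\Rightarrow$ (e) of Theorem \ref{Theorem:TFAE} to write $A = SBS^{-1}$ with $S \in \M_d(\Z)$ invertible and $B \in \M_d(\Z)$. Since similar matrices share a determinant, $\det B = \det A = \pm 1$, while $\det S \neq 0$ by invertibility; Lemma \ref{Lemma:Pigeon} then supplies an integer $n$ with $1 \leq n \leq (\det S)^{2d^2}$ and $A^n \in \M_d(\Z)$, so $\S(A)$ is nontrivial.

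The only genuinely delicate point is the cyclicity half: the hypothesis $\det A = \pm 1$ is exactly what promotes the integrality of $A^m$ to integrality of its inverse $A^{-m}$, and this is precisely what legitimizes subtracting exponents in the division-algorithm step. Without invertibility over $\Z$ one would obtain only a general numerical semigroup, not a cyclic one, so I expect the main obstacle to be recognizing and exploiting this closure under inversion; the remaining steps are a routine assembly of the cited results.
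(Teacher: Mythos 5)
Your proposal is correct and follows essentially the same route as the paper: the ``moreover'' direction is verbatim the paper's argument (Theorem \ref{Theorem:CharPoly} one way, Theorem \ref{Theorem:TFAE}(a)$\Rightarrow$(e) plus Lemma \ref{Lemma:Pigeon} the other), and your cyclicity argument is the paper's observation that $\det A=\pm1$ makes negative powers of $A^m$ integral, merely with the ``subgroups of $\Z$ are cyclic'' step unrolled by hand via the division algorithm.
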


\begin{proof}
First, $A^{-n} = \adj(A^n) / \det(A^n) \in \M_d(\Z)$ since $\det A = \pm 1$.  
Thus, $\widetilde{\S}(A) = \{ n \in \Z : A^n \in \M_d(\Z)\}$ is a subgroup of $\Z$,
and hence cyclic, so $\S(A)$ is cyclic.

If $\S(A)$ is nontrivial, Theorem \ref{Theorem:CharPoly} implies $p_A(x) \in \Z[x]$.
Suppose $p_A(x) \in \Z[x]$.  Theorem \ref{Theorem:TFAE} implies that
$A = SBS^{-1}$, in which $S \in \M_d(\Z)$ is invertible and $B \in \M_d(\Z)$.
Since $\det B = \det A = \pm 1$,
Lemma \ref{Lemma:Pigeon} ensures $A^n \in \M_d(\Z)$ for some $n \geq 1$, so $\S(A) \neq \{0\}$.
\end{proof}

\begin{corollary}
If $A \in \M_d(\Q)$ and $p_A(x)$ is a cyclotomic polynomial, then $\S(A)$ is nontrivial and cyclic.
\end{corollary}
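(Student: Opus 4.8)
The plan is to reduce everything to Theorem \ref{Theorem:Cyclic}, which already settles the case $\det A = \pm 1$: it asserts that such a matrix has cyclic exponent semigroup, nontrivial exactly when $p_A(x) \in \Z[x]$. So the entire task is to check that a cyclotomic characteristic polynomial forces both $p_A(x) \in \Z[x]$ and $\det A = \pm 1$.

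The first condition is immediate, since every cyclotomic polynomial has integer coefficients. For the second, I would use that the eigenvalues of $A$ are roots of a cyclotomic polynomial, hence roots of unity, and so each has absolute value $1$. Because $p_A(x) \in \Z[x]$, its constant term $p_A(0) = (-1)^d \det A$ is an integer, so $\det A \in \Z$; and $|\det A| = \prod_{i} |\lambda_i| = 1$, where $\lambda_1,\ldots,\lambda_d$ are the eigenvalues. Hence $\det A = \pm 1$. Equivalently, one can note directly that the constant term of a cyclotomic polynomial is a unit: the product of the primitive $n$th roots of unity is $(-1)^{\phi(n)}$ for $n > 1$, so $\Phi_n(0) = \prod_\zeta(-\zeta) = \pm 1$, and likewise $\Phi_1(0) = -1$.

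With $p_A(x) \in \Z[x]$ and $\det A = \pm 1$ established, Theorem \ref{Theorem:Cyclic} delivers both conclusions simultaneously: $\S(A)$ is cyclic, and it is nontrivial precisely because $p_A(x) \in \Z[x]$. I do not expect any genuine obstacle; the only substantive point is the observation that cyclotomic roots lie on the unit circle, forcing $\det A = \pm 1$, after which the corollary is a direct appeal to the preceding theorem.
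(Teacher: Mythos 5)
Your argument is correct and matches the paper's proof: both establish $p_A(x)\in\Z[x]$ from the definition of a cyclotomic polynomial and deduce $\det A=\pm 1$ because the determinant is an integer whose absolute value is the product of roots of unity, then invoke Theorem \ref{Theorem:Cyclic}. No substantive difference.
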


\begin{proof}
Since $p_A(x) \in \Z[x]$ and $\det A = p_A(0) = \pm 1$ (being an integer and the product of certain roots of unity)
Theorem \ref{Theorem:Cyclic} ensures that $\S(A)$ is cyclic.
\end{proof}

\section{Symmetric and pseudosymmetric semigroups}\label{Section:Symmetric}
The proof of (c) $\Rightarrow$ (d) in Theorem \ref{Theorem:TFAE} contains a useful observation, which we expand upon here.
This ``Cayley--Hamilton trick'' yields sharp bounds on the matricial dimension of certain numerical semigroups.

\begin{theorem}\label{Theorem:Cayley}
Let $A \in \M_d(\Q)$.  If $\{ n,n+1,\ldots, n+d-1\} \subseteq \S(A)$, then $g(S) \leq n-1$.
That is, if $\S(A)$ contains $d$ consecutive natural numbers, then $\S(A)$ contains all successive natural numbers.
\end{theorem}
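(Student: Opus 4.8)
The plan is to exploit the same ``Cayley--Hamilton trick'' used to prove (a) $\Rightarrow$ (d) in Theorem \ref{Theorem:TFAE}, now run as a sliding-window induction. First I would record that the hypothesis forces $\S(A)\neq\{0\}$: we may assume $n\geq 1$, in which case $n$ itself is a positive element of $\S(A)$ (the only case excluded is $n=0$, where $\{0,1,\ldots,d-1\}\subseteq\S(A)$ with $d\geq 2$ gives $A\in\M_d(\Z)$ and the conclusion $S=\N$ outright). Theorem \ref{Theorem:CharPoly} then guarantees the crucial integrality $p_A(x)\in\Z[x]$; write
\begin{equation*}
p_A(x)=x^d+c_{d-1}x^{d-1}+\cdots+c_1 x+c_0,\qquad c_0,c_1,\ldots,c_{d-1}\in\Z.
\end{equation*}

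Next I would invoke the Cayley--Hamilton theorem, which gives $p_A(A)=0$. Multiplying by $A^m$ yields, for every $m\geq 0$, the recurrence
\begin{equation*}
A^{m+d}=-c_{d-1}A^{m+d-1}-\cdots-c_1 A^{m+1}-c_0 A^{m}.
\end{equation*}
The point is that, because every $c_i$ lies in $\Z$, the matrix $A^{m+d}$ is an \emph{integer} linear combination of the $d$ preceding powers $A^{m},A^{m+1},\ldots,A^{m+d-1}$. Consequently, whenever those $d$ consecutive powers are all integral, so is $A^{m+d}$.

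With this in hand the conclusion follows by induction on $m\geq n$. The base window $A^{n},A^{n+1},\ldots,A^{n+d-1}$ is integral by hypothesis. If $A^{m},A^{m+1},\ldots,A^{m+d-1}\in\M_d(\Z)$ for some $m\geq n$, the recurrence shows $A^{m+d}\in\M_d(\Z)$, and the window shifts forward to the integral block $A^{m+1},\ldots,A^{m+d}$. Hence $A^{m}\in\M_d(\Z)$ for every $m\geq n$, i.e.\ $\{n,n+1,\ldots\}\subseteq\S(A)$, which is precisely $g(S)\leq n-1$.

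I expect no serious obstacle; the argument is short. The one genuinely load-bearing step is establishing $p_A(x)\in\Z[x]$ \emph{before} applying Cayley--Hamilton: without integer coefficients the recurrence would only express $A^{m+d}$ as a rational combination of earlier powers, and the induction would collapse. It is exactly Theorem \ref{Theorem:CharPoly} that upgrades the a priori rational characteristic polynomial to an integral one, and this integrality is what makes the sliding-window propagation preserve membership in $\M_d(\Z)$.
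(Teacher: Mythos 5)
Your proof is correct and follows essentially the same route as the paper's: invoke Theorem \ref{Theorem:CharPoly} to get $p_A(x)\in\Z[x]$, apply Cayley--Hamilton to obtain the integer-coefficient recurrence, and slide the window of $d$ consecutive integral powers forward by induction. Your extra care with the degenerate case $n=0$ is a nice touch but not a substantive difference.
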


\begin{proof}
Suppose $A \in \M_d(\Q)$ and $\{ n,n+1,\ldots, n+d-1\} \subseteq \S(A)$.
Theorem \ref{Theorem:CharPoly} ensures that $p_A(x) \in \Z[x]$.  Write
$p_A(x) = x^d + c_{d-1} x^{d-1} + \cdots + c_1 x + c_0$, in which $c_0,c_1,\ldots,c_{d-1} \in \Z$.
The Cayley--Hamilton theorem \cite[Thm.~11.2.1]{SCLA} provides
\begin{equation*}
A^d = - c_{d-1} A^{d-1} - \cdots - c_1 A - c_0 I.
\end{equation*}
Multiply by $A^{i}$ for $i\geq 0$ and obtain
\begin{equation*}
A^{i+d} = -c_{d-1} A^{i+(d-1)} - \cdots - c_1 A^{i+1} - c_0 A^{i},
\end{equation*}
which shows that $A^{i+d} \in \M_d(\Z)$ whenever $A^i,A^{i+1},\ldots, A^{i+(d-1)} \in \M_d(\Z)$.
Since $\{ n,n+1,\ldots, n+(d-1)\} \subseteq \S(A)$, induction confirms that $\{n,n+1,\ldots\} \subseteq \S(A)$;
that is, $g(S) \leq n-1$.
\end{proof}

The same argument applies to the minimal polynomial:
if $A \in \M_d(\Q)$ and $\S(A)$ contains $\deg m_A(x)$ consecutive
natural numbers, then $\S(A)$ contains all subsequent natural numbers.

\begin{example}\label{Example:34}
Let $S = \semigroup{3,4} = \{3,4,6,7,8,\ldots\}$.
Since $3,4 \in S$ and $5 \notin S$,  Theorem \ref{Theorem:Cayley} says $S \neq \S(A)$ for all $A \in \M_2(\Q)$.  
Thus, $\mdim \semigroup{3,4} = 3$ since $S=\S(A)$ for
\begin{equation*}
A= \begin{bmatrix}
        \frac{-9}{4} & \frac{-5}{4} & \frac{1}{8}\\[2pt]
        -3 & 0 & \frac{-5}{2}\\[2pt]
        \frac{-1}{2} & \frac{3}{2} & \frac{-7}{4}\\
    \end{bmatrix}.
\end{equation*}
\end{example}

A numerical semigroup $S$ is \emph{symmetric} if its Frobenius number $g(S)$ is odd and $x \in \Z \setminus S$ implies $g-x \in S$ \cite[Prop.~4.4(1)]{Rosales}.  
It is \emph{pseudosymmetric} if $g(S)$ is even and $x \in \Z \setminus S$ implies $g-x \in S$ or $x = g(S)/2$ \cite[Prop.~4.4(2)]{Rosales}.
These conditions are usually stated in terms of irreducibility, but for our purposes the definitions above are more convenient.
Recall that the \emph{multiplicity} $m(S)$ of a numerical semigroup $S$ is 
the least generator in the minimal generating set \cite[p.~9]{Rosales}.

\begin{theorem}\label{Theorem:EmbeddingSharp}
$\mdim S \geq m(S)$ for any symmetric numerical semigroup $S$.
\end{theorem}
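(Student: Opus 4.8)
The plan is to argue by contradiction, exploiting symmetry to manufacture a long run of consecutive elements of $S$ lying immediately below the Frobenius number, and then to invoke the Cayley--Hamilton trick of Theorem~\ref{Theorem:Cayley} to force $g(S)$ itself into $S$ --- which is absurd. Writing $m = m(S)$ and $g = g(S)$, I would fix $A \in \M_d(\Q)$ with $\S(A) = S$ and $d = \mdim S$, and suppose toward a contradiction that $d \leq m - 1$.

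The one real idea is a translation of the low gaps to the top of the semigroup. Because $m$ is the least nonzero element of $S$, none of $1, 2, \ldots, m-1$ lies in $S$; the defining implication of symmetry, $x \in \Z \setminus S \Rightarrow g - x \in S$, then places all of $g-1, g-2, \ldots, g-(m-1)$ in $S$. These are $m-1$ consecutive integers belonging to $S$, sitting just below the gap at $g$. (Only this one-directional form of symmetry is needed, and $g \geq m-1$ since $m-1 \notin S$.)

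Since $d \leq m-1$, the block $\{g-d, g-d+1, \ldots, g-1\}$ is then a set of $d$ consecutive natural numbers contained in $\S(A)$, with $g - d \geq 0$ because $d \leq m-1 \leq g$. Applying Theorem~\ref{Theorem:Cayley} with $n = g-d$ yields $g(S) \leq (g-d) - 1 < g$, contradicting $g(S) = g$. Hence $d \geq m$, that is, $\mdim S \geq m(S)$.

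The substance is entirely in the middle step: recognizing that symmetry is precisely the device that converts the \emph{a priori} trivial gaps $1, \ldots, m-1$ into a run of $m-1$ consecutive semigroup elements abutting $g$, so that the Cayley--Hamilton recurrence has exactly enough consecutive integral powers to spill over into $A^g$. Once this is seen, Theorem~\ref{Theorem:Cayley} does the remaining work and nothing delicate is left. I do not anticipate a genuine obstacle; the only real risk is an off-by-one slip in the length of the consecutive block or in the index $n$ passed to Theorem~\ref{Theorem:Cayley}, which the explicit bookkeeping above is meant to forestall.
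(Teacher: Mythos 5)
Your proof is correct and follows exactly the paper's argument: the gaps $1,\ldots,m-1$ are reflected by symmetry into the run $g-1,\ldots,g-(m-1)$ of elements of $S$, and Theorem~\ref{Theorem:Cayley} then forces $g \in S$ if $d < m$, a contradiction. Your version merely makes the indexing in the appeal to Theorem~\ref{Theorem:Cayley} more explicit than the paper does.
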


\begin{proof}
Let $S=\semigroup{n_1,n_2,\ldots,n_k}$ be a minimal presentation of $S$, with $n_1 = m(S)$ and Frobenius number $g=g(S)$.
Then $1,2,3,\ldots,n_1-1 \notin S$, so $g-1,g-2,\ldots,g-(n_1-1) \in S$.  
If $S = \S(A)$ for some $A \in \M_{d}(\Q)$ with $d < n_1$, then Theorem \ref{Theorem:Cayley} implies that 
$g \in S$, which is impossible.  Thus, $\mdim S \geq n_1 = m(S)$.
\end{proof}

\begin{example}\label{Example:Nugget}
Since the McNugget semigroup $\S = \semigroup{6,9,20}$ is symmetric and $m(S) = 6$.
Theorem \ref{Theorem:EmbeddingSharp} and \eqref{eq:NuggetMatrix} ensure that $\mdim S = 6$.
\end{example}

The next corollary states that the matricial dimension of any two-generator numerical semigroup
is bounded below by its minimal generator.

\begin{corollary}\label{Corollary:TwoGenerator}
Let $S$ be a numerical semigroup.
If $e(S) = 2$, then $\mdim S \geq m(S)$.
\end{corollary}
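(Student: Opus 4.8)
The plan is to reduce the corollary to Theorem \ref{Theorem:EmbeddingSharp} by showing that every numerical semigroup of embedding dimension two is symmetric. Once this is established, the inequality $\mdim S \geq m(S)$ follows at once from the symmetric case already in hand, since no new argument about matricial dimension is needed.

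First I would recall the classical structure of a two-generator semigroup $S = \semigroup{a,b}$ with $\gcd(a,b) = 1$, where $a = m(S)$. The key is Sylvester's description together with the fact that such $S$ is symmetric. To verify symmetry directly, I would exploit the normal form: since $\gcd(a,b) = 1$, every integer $n$ admits a unique representation $n = ax + by$ with $0 \leq x \leq b-1$ and $y \in \Z$, and one checks that $n \in S$ precisely when $y \geq 0$. Writing $g - n = a(b-1-x) + b(-1-y)$ exhibits the analogous normal form for $g - n$, from which $g - n \in S$ if and only if $y < 0$, i.e.\ if and only if $n \notin S$. Thus exactly one of $n$ and $g-n$ lies in $S$, which is the reflection property in the definition of symmetric; the same computation identifies $g = g(S) = ab - a - b$ as the Frobenius number.

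I would then check the parity requirement built into the definition of \emph{symmetric}: because $\gcd(a,b) = 1$, the two generators are not both even, so a short case analysis on the parities of $a$ and $b$ (both odd, or one of each) shows that $g = ab - a - b$ is always odd, exactly as the definition demands. With the reflection property and the parity condition verified, $S$ is symmetric in the precise sense used earlier in this section.

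With symmetry established, Theorem \ref{Theorem:EmbeddingSharp} applies verbatim and yields $\mdim S \geq m(S)$. The only genuine content is the symmetry of two-generator numerical semigroups, which is standard and recorded in \cite{Rosales}; the main thing to be careful about is matching the exact form of the definition of symmetric adopted here (odd Frobenius number together with the reflection property) rather than invoking one of the equivalent characterizations via irreducibility. This matching is precisely what the normal-form computation above supplies, so no subtlety remains once it is carried out.
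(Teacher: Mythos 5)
Your proposal is correct and follows exactly the paper's route: the paper's proof is a one-liner reducing to Theorem \ref{Theorem:EmbeddingSharp} via the fact that every embedding-dimension-two numerical semigroup is symmetric, citing \cite[Cor.~4.7]{Rosales} for that fact. The only difference is that you prove the symmetry claim from scratch (correctly, via the normal form $n = ax+by$ with $0 \leq x \leq b-1$ and the parity check on $g = ab-a-b$) where the paper simply cites it.
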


\begin{proof}
This follow from Theorem \ref{Theorem:EmbeddingSharp} since
every numerical semigroup of embedding dimension $2$ is symmetric \cite[Cor.~4.7]{Rosales}.
\end{proof}

\begin{theorem}\label{Theorem:Pseudosymmetric}
Let $S$ be a nontrivial pseudosymmetric numerical semigroup
with Frobenius number $g(S)$ and multiplicity (minimal generator) $m(S)$.
\begin{enumerate}[leftmargin=*]
\item If $g(S) < m(S)$, then $\mdim S = 2$.

\item If $m(S) < g(S) < 2m(S)$, then $\mdim S \geq m(S) - 1$.

\item If $g(S) \geq 2m(S)$, then $\mdim S \geq m(S)$.
\end{enumerate}
\end{theorem}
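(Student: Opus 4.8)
The plan is to treat the three cases separately, in each reducing to the ``Cayley--Hamilton trick'' of Theorem~\ref{Theorem:Cayley}: if $S = \S(A)$ for some $A \in \M_d(\Q)$ and $S$ contains $d$ consecutive integers beginning at an index $n \leq g(S)$, then Theorem~\ref{Theorem:Cayley} gives $g(S) \leq n-1 < g(S)$, an absurdity. So for each lower bound I would locate a long enough run of consecutive elements of $S$ lying at or below $g(S)$, manufactured from the pseudosymmetric pairing $x \leftrightarrow g(S)-x$, and then invoke Theorem~\ref{Theorem:Cayley} on the top $d$ terms of that run.

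For (a), I would first record the general fact that $1,2,\ldots,m(S)-1 \notin S$, so $g(S) \geq m(S)-1$ for every numerical semigroup. The hypothesis $g(S) < m(S)$ then forces $g(S) = m(S)-1$, whence $S = \{0\} \cup \{m(S), m(S)+1, \ldots\}$. As a proper numerical semigroup, $m(S) \geq 2$, so Proposition~\ref{Proposition:DimOne} gives $\mdim S \geq 2$ while Proposition~\ref{Proposition:DimTwo}(b) supplies an explicit $2 \times 2$ realization; hence $\mdim S = 2$.

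For (c), the central gap $g(S)/2$ is harmless: since $g(S) \geq 2m(S)$ we have $g(S)/2 \geq m(S) > m(S)-1$, so none of the gaps $1, 2, \ldots, m(S)-1$ equals $g(S)/2$. Pseudosymmetry therefore sends each of them into $S$, producing the run $g(S)-1, g(S)-2, \ldots, g(S)-(m(S)-1)$ of $m(S)-1$ consecutive elements, all below $g(S)$. If $\mdim S = d \leq m(S)-1$, then the top $d$ of these begin at $n = g(S)-d \leq g(S)$ and Theorem~\ref{Theorem:Cayley} yields the contradiction above; hence $\mdim S \geq m(S)$.

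Case (b) is where the real work lies and is the step I expect to be the main obstacle, because now $g(S)/2$ (which is a gap, since $g(S)/2 + g(S)/2 = g(S) \notin S$ and $S$ is closed under addition) can fall inside the interval $[g(S)-(m(S)-1),\, g(S)-1]$ and break the run. The decisive point is that the hypotheses actually pin down $g(S) = 2m(S)-2$: if instead $g(S) \leq 2m(S)-4$, then $g(S)/2 < m(S)-1$, so the gap $m(S)-1$ is distinct from $g(S)/2$ and pseudosymmetry would place $g(S)-(m(S)-1) \in S$; but this integer lies in $\{1, \ldots, m(S)-1\}$, contradicting the minimality of $m(S)$. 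With $g(S) = 2m(S)-2$, the exceptional gap $g(S)/2 = m(S)-1$ is the \emph{largest} of $1, \ldots, m(S)-1$, so pairing the remaining gaps $1, \ldots, m(S)-2$ yields the clean contiguous run $\{m(S), m(S)+1, \ldots, 2m(S)-3\}$ of length $m(S)-2$, all below $g(S)$. Applying Theorem~\ref{Theorem:Cayley} to its top $d$ terms shows that $\mdim S = d \leq m(S)-2$ is impossible, giving $\mdim S \geq m(S)-1$. The main things to verify carefully are the forcing of $g(S) = 2m(S)-2$ and the bookkeeping confirming that the surviving pairs assemble into a single contiguous block of the stated length.
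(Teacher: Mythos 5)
Your proposal is correct and follows essentially the same route as the paper's proof: case (a) via the forced form $S=\{0\}\cup\{m,m+1,\ldots\}$ and Proposition~\ref{Proposition:DimTwo}, and cases (b) and (c) by using the pseudosymmetric pairing to produce a run of $m(S)-2$ (respectively $m(S)-1$) consecutive elements of $S$ below $g(S)$ and then invoking Theorem~\ref{Theorem:Cayley}. Your derivation of $g(S)=2m(S)-2$ in case (b) is just a mild reorganization of the paper's argument locating the exceptional gap $g(S)/2$ at $g(S)-(m(S)-1)$.
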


\begin{proof}
Let $S = \semigroup{n_1,n_2,\ldots,n_k}$ be a minimal presentation of a nontrivial pseudosymmetric
numerical semigroup with $m=m(S) = n_1$ and $g = g(S)$.

\medskip\noindent(a)
If $g < m$, the definition of the Frobenius number implies $g = m-1$ and $S = \{0\} \cup \{m,m+1,\ldots\}$.
Proposition \ref{Proposition:DimTwo} provides an $A \in \M_2(\Q)$ such that $\S(A) = S$.  
We can be more specific: $S = \semigroup{3,4,5}$; see Remark \ref{Remark:345}.

\medskip\noindent(b)
Suppose $m < g < 2m$, so $\frac{g}{2}<m$. 
Then $1,2,3,\ldots,m-1 \notin S$. 
Since $S$ is pseudosymmetric, $g-1,g-2,\ldots,g-(m-1) \in S$ with exactly one exception: $g-i=\frac{g}{2} < m$, which implies that $g-i \notin S$.  Thus, $1,2,\ldots, g-i \notin S$.
If $1 \leq i \leq m-2$, then $g-(m-1) \notin S$, which is impossible since $S$ is pseudosymmetric.
Hence, $i=m-1$ and $\frac{g}{2}=g-(m-1)$. So $g-1,g-2,\ldots,g-(m-2) \in S$ and $g \notin S$. Theorem \ref{Theorem:Cayley} says $\mdim S \geq m-1$.

\medskip\noindent(c)
Suppose $g \geq 2m$.  Then $g-i \neq \frac{g}{2}$ for $1\leq i \leq m-1$.
Since $1,2,3,\ldots,m-1 \notin S$ pseudosymmetry ensures that $g-1,g-2,\ldots,g-(m-1) \in S$. 
Since $g \notin S$, Theorem \ref{Theorem:Cayley} yields $\mdim A \geq m$.
\end{proof}

The next three remarks show that the inequalities in Theorem \ref{Theorem:Pseudosymmetric} are sharp.

\begin{remark}\label{Remark:345}
The only pseudosymmetric numerical semigroup with $g(S)<m(S)$ is $S=\semigroup{3,4,5}$.
Indeed, $S$ is pseudosymmetric if and only if $|\N \setminus S|=(g(S)+2)/2$ \cite[Cor.~4.5]{Rosales}. If $g(S)<m(S)$, then $g(S)=m(S)-1$ and $|\N \setminus S|=m(S)-1$, so
    \begin{equation*}
        |\N \setminus S|=\frac{g(S)+2}{2}
        \iff m(S)-1=\frac{m(S)-1+2}{2}
        \iff m(S)=3.
    \end{equation*}
    Since $m(S)=3$ and $g(S)=3-1=2$, we have $S=\semigroup{3,4,5} = \{0,3,4,\ldots\}$.
\end{remark}

\begin{remark}\label{Remark:357}
The bound in (b) is sharp.  Let $S = \semigroup{3,5,7}$,
which is pseudosymmetric with $g(S)=4$, $m(S) = 3$, and $\mdim S = 2 = m(S)-1$; see Table \ref{Table:Examples}.
\end{remark}

\begin{remark}\label{Remark:53352}
The bound in (c) is sharp.  Let $S=\semigroup{5,33,52}$,
which is pseudosymmetric with $g(S)=94$, $m(S) = 5$, and $\mdim S = 5= m(S)$;
see Table \ref{Table:Examples}.
\end{remark}

\section{Nilpotent representations}\label{Section:Nilpotent}

A numerical semigroup $S \neq \{0\}$ contains every natural number greater than its Frobenius number $g(S)$.
If $A \in \M_{g(S)+1}(\Q)$ is nilpotent, then $n \in \S(A)$ for $n > g(S)$ since $A^n= 0$.  
Thus, we hope to find a nilpotent $A$ such that $\S = S(A)$.

For $\vec{x}=(x_1,x_2,\ldots,x_n) \in \Q^{d-1}$, let $\superdiag \vec{x} \in \M_d(\Q)$ denote the $d \times d$ matrix
with all zero entries except for the first superdiagonal, which has $x_1,x_2,\ldots,x_n$ listed there.
For example, $\superdiag(1,1,\ldots,1)$ is a nilpotent Jordan block.  

\begin{example}
Let $A = \superdiag(a,b,c,d) \in \M_5(\Q)$, in which $a,b,c,d \neq 0$.  Then 
\begin{equation*}
A= \left[
\begin{smallmatrix}
\0 & a & \0 & \0 & \0\\
\0 & \0 & b & \0 & \0\\
\0 & \0 & \0 & c & \0\\
\0 & \0 & \0 & \0 & d\\
\0 & \0 & \0 & \0 & \0\\
\end{smallmatrix}
\right], \,\,
A^2= \left[
\begin{smallmatrix}
\0 & \0 & ab & \0 & \0\\
\0 & \0 & \0 & bc & \0\\
\0 & \0 & \0 & \0 & cd\\
\0 & \0 & \0 & \0 & \0\\
\0 & \0 & \0 & \0 & \0\\
\end{smallmatrix}
\right], \,\,
A^3= \left[
\begin{smallmatrix}
\0 & \0 & \0 & abc & \0\\
\0 & \0 & \0 & \0 & bcd\\
\0 & \0 & \0 & \0 & \0\\
\0 & \0 & \0 & \0 & \0\\
\0 & \0 & \0 & \0 & \0\\
\end{smallmatrix}
\right], \,\,
A^4= \left[
\begin{smallmatrix}
\0 & \0 & \0 & \0 & abcd\\
\0 & \0 & \0 & \0 & \0\\
\0 & \0 & \0 & \0 & \0\\
\0 & \0 & \0 & \0 & \0\\
\0 & \0 & \0 & \0 & \0\\
\end{smallmatrix}
\right],
\end{equation*}
and $A^n = 0$ for $n \geq 5$.  The nonzero entries of the powers of $A$ are certain products of the nonzero entries of $A$.
We leverage this in the proof of the next theorem.  
\end{example}

\begin{theorem}\label{Theorem:Nilpotent}
If $S \subseteq \N$ is a nontrivial numerical semigroup with Frobenius number $g = g(S)$, there is a nilpotent $A \in \M_{g+1}(\Q)$ of order $g+1$ such that $S = \S(A)$. 
\end{theorem}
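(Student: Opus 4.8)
The plan is to follow the superdiagonal template from the preceding Example and to encode the semigroup into the exponents of a single prime. I would set $A = \superdiag(a_1,a_2,\ldots,a_g) \in \M_{g+1}(\Q)$ with $a_k = 2^{e_k}$ for integers $e_k$ to be chosen. Any such $A$ is strictly upper triangular, hence nilpotent with $A^{g+1}=0$; and since each $a_k \neq 0$, the $(1,g+1)$ entry of $A^g$ equals $a_1a_2\cdots a_g \neq 0$, so the nilpotency order is exactly $g+1$. Thus the only genuine task is to choose the $e_k$ so that $\S(A) = S$.

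Next I would translate integrality of powers into a condition on partial sums. As the Example illustrates, for $1 \le n \le g$ the matrix $A^n$ is supported on its $n$th superdiagonal, with $(i,i+n)$ entry $a_i a_{i+1}\cdots a_{i+n-1} = 2^{e_i + e_{i+1} + \cdots + e_{i+n-1}}$; every other entry vanishes, and $A^n = 0$ for $n > g$. A power of $2$ is an integer exactly when its exponent is nonnegative, so $A^n \in \M_{g+1}(\Z)$ if and only if every length-$n$ window sum $e_i + \cdots + e_{i+n-1}$ is $\ge 0$. Writing $F_k = e_1 + \cdots + e_k$ (and $F_0 = 0$), each window sum equals $F_{i+n-1} - F_{i-1}$, so I have reduced the problem to designing the ``potential'' $F_0,\ldots,F_g$.

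The choice I would make is $F_k = \chi_S(k) - 1$, i.e.\ $F_k = 0$ when $k \in S$ and $F_k = -1$ when $k$ is a gap; equivalently $a_k = 2^{\chi_S(k) - \chi_S(k-1)}$, which is $2$, $\tfrac{1}{2}$, or $1$ according to whether $k$ enters, leaves, or stays inside $S$. With $F_k \in \{0,-1\}$, a window sum $F_{a+n} - F_a$ is negative precisely when $a \in S$ and $a+n \notin S$ (here $a = i-1 \in \{0,\ldots,g-n\}$). Hence $A^n \in \M_{g+1}(\Z)$ if and only if $a+n \in S$ for every $a \in S$ with $a \le g-n$. For $n \in S$ this holds by closure of $S$ under addition; for a gap $n \le g$ it fails at $a=0$, since $0+n = n \notin S$. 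Together with $0 \in \S(A)$ (as $A^0=I$) and $n \in \S(A)$ for all $n > g$ (as $A^n = 0$, and indeed every such $n$ lies in $S$ because $g = g(S)$), this yields $\S(A) = S$.

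I expect the only delicate point to be the bookkeeping that lets a single prime detect all windows at once: one must check that the two requirements---nonnegativity of every window whose length lies in $S$, and negativity of some window for each gap length---are mutually consistent. This consistency is exactly what the potential $F_k = \chi_S(k)-1$ secures, and the crucial half is the forward implication for $n \in S$, which is nothing more than the semigroup closure $a,n \in S \Rightarrow a+n \in S$; the gap direction is the easy one, handled by the single window starting at $a = 0$.
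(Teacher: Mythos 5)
Your proposal is correct, and in fact the matrix you build is \emph{identical} to the one the paper constructs: the paper's Algorithm~1 maintains a tally $\sigma$ equal to $0$ when $i \in S$ and $-1$ when $i \notin S$, so its output is exactly $x_i = \chi_S(i) - \chi_S(i-1)$, i.e.\ your $e_i$ (with base $b=2$ a permitted choice). Where you genuinely diverge is in the verification. The paper proves the window condition only for the minimal generators $n_j$, invokes additive closure of that condition to cover all of $S$, and establishes each generator's case by an induction on the window's starting index supported by eight structural observations about $\vec{x}$ and a three-way case analysis on $(x_i, x_{i+n_j})$. You instead introduce the potential $F_k = \chi_S(k)-1$ and note that every length-$n$ window telescopes to $F_{a+n}-F_a = \chi_S(a+n)-\chi_S(a)$, which is negative precisely when $a \in S$ and $a+n \notin S$; nonnegativity for all $n \in S$ is then literally the closure axiom $a, n \in S \Rightarrow a+n \in S$, and failure for each gap $n \le g$ is witnessed by the single window at $a=0$. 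This collapses the paper's longest argument to two lines, handles all of $S$ at once rather than generator by generator, and makes the paper's property~(a) a special case of the same telescoping identity. The only loose end is the degenerate case $S=\N$ (where $g=-1$ and there is no superdiagonal to fill), which the paper dispatches with the $1\times 1$ zero matrix; you should add that sentence. Otherwise the argument is complete, and the exact nilpotency order $g+1$ is correctly certified by the $(1,g+1)$ entry $2^{F_g} = 2^{-1} \ne 0$ of $A^g$.
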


\begin{proof}
If $S = \N$, let $A$ be the $1 \times 1$ zero matrix.
So suppose $S = \semigroup{n_1,n_2,\ldots,n_k}$, in which $2 \leq n_1 < n_2 < \cdots < n_k$ is a minimal generating set.
Fix $b \in \Z \setminus \{-1,0,1\}$ and define $A = \superdiag(b^{x_1}, b^{x_2},\ldots,b^{x_{g}}) \in \M_{g+1}(\Q)$,
in which $x_1,x_2,\ldots,x_g \in \Z$ must be determined.  Observe that $A$ is nilpotent of order $g+1$ and
\begin{align*}
A^j \in \M_d(\Z)
    &\iff b^{x_i+x_{i+1}+\cdots+x_{i+(j-1)}} \in \Z    &&\text{for $1 \leq i \leq g-(j-1)$} \\
    &\iff x_i+x_{i+1}+\cdots+x_{i+(j-1)} \geq 0            && \text{for $1 \leq i \leq g-(j-1)$}.
\end{align*}
In other words, $A^j$ is an integer matrix if and only if the sum of every $j$ consecutive $x_i$ is nonnegative.
The condition above is additively closed: if it holds for $j$ and $k$, it holds for $j+k$.
Thus, it suffices to find $\vec{x} = (x_1,x_2,\ldots,x_g) \in \Z^g$ such that
\begin{enumerate}[label=(\alph*), leftmargin=*]
    \item $x_1+x_2+\cdots+x_k <0$ for each $k \notin S$; and
    \item $x_i+x_{i+1}+\cdots+x_{i+(n_j-1)} \geq 0$ for each $n_j$ and $1 \leq i \leq g - (n_j-1)$.
\end{enumerate}
Define
\begin{equation*}
x_i =
\begin{cases}
 1   & \text{if $i \in S$ and $i-1 \notin S$},\\
      -1  & \text{if $i \notin S$ and $i-1 \in S$},\\
      0   & \text{otherwise}.
\end{cases}
\end{equation*}
That is, walking along the number line, $x_i = 1$ when entering the semigroup and $x_i = -1$ when leaving the semigroup.
Since $1 \notin S$, we have $x_1 = -1$.

\medskip\noindent\textbf{Property (a).}
The definition of the $x_i$ ensures that (a) holds since
\begin{equation*}
x_1 + x_2 + \cdots + x_k =
\begin{cases}
0 &\text{if $k \in S$} ,\\
-1 & \text{if $k \notin S$}.
\end{cases}
\end{equation*}

\medskip\noindent\textbf{Property (b).}
Observe that
{
\renewcommand{\labelenumi}{(\arabic{enumi})}
\begin{enumerate}[leftmargin=*]
    \item $x_1=-1$ since $S \neq \N$.

    \item $x_i \in \{-1,0,1\}$ for all $1 \leq i \leq g$. 

    \item If $x_i=1$, then $i \in S$.

    \item If $x_i=-1$, then $i \notin S$.

    \item If $x_i=x_j=-1$ and $i < j$, there exists an $i < m < j$ such that $x_m=1$. That is, $\vec{x}$ cannot have two $-1$s with only $0$s between them.

    \item If $x_i=x_j=1$ and $i < j$, there exists an $i < m < j$ such that $x_m=-1$. That is, $\vec{x}$ cannot have two $1$s with only $0$s between them.

    \item If $x_i=0$ and $i \in S$, there exists a $j<i$ such that $x_j=1$ and $x_{j+1}=x_{j+2}=\cdots=x_{i-1}=0$.
     That is, the first nonzero value that precedes $x_i$ in $\vec{x}$ is $1$, with only $0$s between it and $x_i$.

    \item If $x_i=0$ and $i \notin S$, there exists a $j<i$ such that $x_j=-1$ and $x_{j+1}=x_{j+2}=\cdots=x_{i-1}=0$.
     That is, the first nonzero value that precedes $x_i$ in $\vec{x}$ is $-1$, with only $0$s between it and $x_i$.
\end{enumerate}
}

For each fixed generator $n_j$, we use induction to prove that the sum of any $n_j$ consecutive entries of $\vec{x}$ is at least $0$.  This will establish that property (b) holds.

\medskip\noindent\textit{Base Case:} 
Since $n_j \in S$, we have $x_1+x_2+\cdots+x_{n_j}=0$, as desired.

\medskip\noindent\textit{Induction Step:} 
Suppose $x_i+x_{i+1}+\cdots+x_{i+(n_j-1)} \geq 0$ for some $1 \leq i \leq g-n_j$.
We must prove that
\begin{equation}\label{eq:InductionDesire}
x_{i+1}+x_{i+2}+\cdots+x_{i+n_j} \geq 0.  
\end{equation}
The induction hypothesis ensures that
\begin{align*}
x_{i+1}+x_{i+2}+\cdots+x_{i+n_j} 
&= (x_i+x_{i+1}+\cdots+x_{i+(n_j-1)}) - x_i + x_{i+n_j} \\
&\geq  x_{i+n_j} - x_i .
\end{align*}
If $x_i \leq x_{i+n_j}$, then \eqref{eq:InductionDesire} holds.
Since $\vec{x} \in \{-1,0,1\}^g$, this occurs for
\begin{equation*}
(x_i,x_{i+n_j}) \in \big\{(-1,-1), \, (-1,0), \, (-1,1), \, (1,1), \, (0,0), \, (0,1) \big\}.
\end{equation*}
We must consider the three remaining cases:
\begin{equation*}
(x_i,x_{i+n_j}) \in \big\{(1,0), \, (1,-1), \, (0,-1) \big\}.
\end{equation*}

\medskip\noindent\textit{Case 1:}  $(x_i,x_{i+n_j})=(1,-1)$.
Observation 3 says $i \in S$ since $x_i=1$. Thus, $i+n_j \in S$ since $n_j \in S$. 
But  $x_{i+n_j}=-1$ implies $i+n_j \notin S$ by Observation 4, a contradiction. So this case cannot occur.

\medskip\noindent \textit{Case 2:}  $(x_i,x_{i+n_j})=(1,0)$.
Observation 3 says $i \in S$ since $x_i=1$.  Thus, $i+n_j \in S$ since $n_j \in S$. 
Then $1$ is the first nonzero entry of $\vec{x}$ before $x_{i+n_j} = 0$ by observation 7. 
Since $x_i = 1$ and observations 5 and 6 say the nonzero values of $\vec{x}$ alternate between $-1$ and $1$, it follows that $x_{i+1}+x_{i+2}+\cdots+x_{i+n_j} = 0$, so \eqref{eq:InductionDesire} is satisfied.

\medskip\noindent \textit{Case 3:}  $(x_i,x_{i+n_j})=(0,-1)$.
If $i \in S$, then $i+n_j \in S$ since $n_j \in S$, which contradicts $x_{i+n_j}=-1$ by observation 4.
Then $i \notin S$, so observation 8 says $-1$ is the first nonzero entry of $\vec{x}$ before $x_i=0$.
Since $x_{i+n_j} = -1$ and observations 5 and 6 ensure the nonzero values of $\vec{x}$ alternate between $-1$ and $1$, it follows that $x_{i+1}+x_{i+2}+\cdots+x_{i+n_j} = 0$, so  
\eqref{eq:InductionDesire} is satisfied.
\medskip

Thus, $\vec{x}= (x_1,x_2,\ldots,x_g) \in \{-1,0,1\}^g$ satisfies properties (a) and (b), so $A = \superdiag(b^{x_1}, b^{x_2}, \ldots, b^{x_g}) \in \M_{g+1}(\Q)$ satisfies $\S(A) = S$.
\end{proof}

\begin{example}
Let $S = \semigroup{6,9,20}$, which has $g(S) = 43$.  Then
\begin{align*}
\vec{x} &=(-1, 0, 0, 0, 0, 1, -1, 0, 1, -1, 0, 1, -1, 0, 1, -1, 0, 1, -1, 1, 0, -1, 0,\\&\qquad 1, -1, 1, 0, -1, 1, 0, -1, 1, 0, -1, 1, 0, -1, 1, 0, 0, 0, 0, -1).
\end{align*}
\end{example}

\begin{remark}
The order of nilpotence of the matrix in Theorem \ref{Theorem:Nilpotent} is $g(S) + 1$.  Any smaller and it would contradict the
definition of the Frobenius number.
\end{remark}

\begin{corollary}\label{Corollary:General}
If $S \subseteq \N$ is a nontrivial (not necessarily numerical) semigroup, there is a rational matrix $A$ such that $\S(A) = S$.
\end{corollary}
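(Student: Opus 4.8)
The plan is to reduce to the numerical case already settled by Theorem \ref{Theorem:Nilpotent} and then rescale. First I would set $d = \gcd(S \setminus \{0\})$, which is well defined because $S \neq \{0\}$. Every nonzero element of $S$ is then a multiple of $d$, so $T = \{s/d : s \in S\}$ is a subsemigroup of $\N$ whose nonzero elements have gcd $1$; such a subsemigroup has finite complement, so $T$ is a numerical semigroup. Write $g = g(T)$. If $d = 1$ then $S = T$ is already numerical and Theorem \ref{Theorem:Nilpotent} applies verbatim, so assume $d \geq 2$, which is exactly the case in which $S$ is non-numerical. Theorem \ref{Theorem:Nilpotent} then furnishes a nilpotent $B \in \M_{g+1}(\Q)$ with $\S(B) = T$, and the remaining task is to manufacture a matrix whose exponent semigroup is $dT = S$.

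The conceptual obstacle is that $B$ cannot be reused directly. Since $S$ is non-numerical it has infinitely many gaps, whereas any matrix that is eventually zero (in particular any nilpotent) forces all large $n$ into its exponent semigroup; and the naive direct sum $B \oplus C$ only produces $\S(B) \cap \S(C) = T \cap d\N$, which is strictly larger than $dT$ in general. I therefore need two separate devices: one to convert the ``shape'' $T$ into $dT$, and one to enforce divisibility by $d$ for every exponent, permanently. For the first, let $A_B \in \M_{d(g+1)}(\Q)$ be the block-cyclic matrix built from $N \times N$ blocks ($N = g+1$), with the identity $I$ in the block positions $(i,i+1)$ for $1 \leq i \leq d-1$, with $B$ in the block position $(d,1)$, and with $0$ elsewhere. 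A direct block computation gives $A_B^{\,d} = \diag(B, B, \ldots, B)$, so $A_B^{\,dq} = \diag(B^q, \ldots, B^q)$ is integral if and only if $B^q$ is, i.e.\ if and only if $q \in T$; hence $\S(A_B) \cap d\N = dT$. For the second, take $C = \minimatrix{1}{\frac{1}{d}}{0}{1}$, so that $C^n = \minimatrix{1}{\frac{n}{d}}{0}{1}$ yields $\S(C) = d\N$; crucially $C$ is unipotent, so no power of $C$ indexed by a non-multiple of $d$ is ever integral.

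Finally I would set $A = A_B \oplus C$ and invoke Theorem \ref{Theorem:Properties}(a):
\begin{equation*}
\S(A) = \S(A_B) \cap \S(C) = \S(A_B) \cap d\N = dT = S.
\end{equation*}
(Equivalently, $\S(A) \subseteq \S(C) = d\N$ forces $d \mid \gcd \S(A)$, while $A^d = \diag(B,\ldots,B) \oplus C^d$ gives $\S(A^d) = T$, so Theorem \ref{Theorem:Properties}(e) yields $\S(A) = d\,\S(A^d) = dT$.) The main obstacle is the middle step: establishing the block identity $A_B^{\,d} = \diag(B,\ldots,B)$ and extracting from it the key equality $\S(A_B) \cap d\N = dT$. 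Everything else is bookkeeping, and the direct sum with the unipotent $C$ is precisely what circumvents the nilpotency-versus-infinite-gap conflict that blocks any single-matrix construction.
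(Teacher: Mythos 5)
Your proof is correct, and it shares the paper's overall skeleton (reduce to the numerical semigroup $T = S/d$, then enforce $d \mid n$ by a direct sum with the unipotent $C = \minimatrix{1}{1/d}{0}{1}$ and Theorem \ref{Theorem:Properties}(a)), but the key middle device is genuinely different. The paper does not invoke Theorem \ref{Theorem:Nilpotent} as a black box: it reruns Algorithm \ref{Algorithm:Nilpotent} directly on the non-numerical input $(n_1,\ldots,n_k)$, truncating at step $d\,g(T)$, and observes that the resulting superdiagonal nilpotent $A'$ of size $d\,g(T)+1$ already satisfies $\S(A') \cap d\N = S$; this requires re-examining the algorithm's correctness outside the numerical case (the paper notes it ``still functions'' and sets $x_i = 0$ when $d \nmid i$). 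You instead keep Theorem \ref{Theorem:Nilpotent} intact and wrap its output $B$ in a block-cyclic shell $A_B$ with $A_B^{\,d} = \diag(B,\ldots,B)$, which dilates the exponent semigroup by $d$ along multiples of $d$; your verification of $\S(A_B)\cap d\N = dT$ is sound, as is the alternative finish via Theorem \ref{Theorem:Properties}(e). Your route is more modular and the dilation trick is a reusable device for passing from a representation of $T$ to one of $dT$; the paper's route yields a slightly smaller matrix ($d\,g(T)+3$ versus your $d(g(T)+1)+2$) and stays entirely within the superdiagonal framework. Your diagnosis of why the naive direct sum of the matrix for $T$ with $C$ fails (it gives $T \cap d\N$, not $dT$) correctly identifies the obstruction both constructions are designed to overcome.
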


\begin{proof}
Let  $S=\semigroup{n_1,n_2,\ldots,n_k}$, in which $1 \leq n_1 < n_2< \ldots < n_k$ is the unique minimal system of generators of $S$ \cite[Cor.~2.8]{Rosales}
and let $d = \gcd(n_1,n_2,\ldots,n_k)$.  If $d= 1$, then $S$ is a numerical semigroup and Theorem~\ref{Theorem:Nilpotent} provides the desired matrix.
If $d \geq 2$, then $S'=\semigroup{\frac{n_1}{d},\frac{n_2}{d},\ldots,\frac{n_k}{d}}$ is a numerical semigroup \cite[Prop.~2.2]{Rosales}.
The proof of Proposition \ref{Proposition:DimTwo}.a shows that $\S(B) = \semigroup{d}$ for 
$B= \big[ \begin{smallmatrix} 1 & 1/d \\ 0 & 1 \end{smallmatrix} \big]$.
Let $A'$ be the matrix obtained by applying the algorithm of Theorem \ref{Theorem:Nilpotent} to the input $(n_1,n_2,\ldots,n_k)$
and terminating at step $i=d g(S')$.  Although $S$ is not a numerical semigroup, the algorithm still functions and sets $x_i = 0$ if $d \nmid i$.
Then $A = A' \oplus B$ satisfies $\S(A) = S$ by Proposition \ref{Theorem:Properties}.a.
\end{proof}

\begin{example}
Let $S = \semigroup{15,21,33}$, so  $d= \gcd(15,21,33)=3$ and
$B= \big[ \begin{smallmatrix} 1 & 1/3 \\ 0 & 1 \end{smallmatrix} \big]$.
The Frobenius number of $S'=\semigroup{5,7,11}$ is $g(S')=13$, so we run the algorithm until $i = 3 \cdot 13 = 39$ and obtain
\begin{align*}
\vec{x}
&=(-1, 0, 0, 0, 0, 0, 0, 0, 0, 0, 0, 0, 0, 0, 1, -1, 0, 0, 0, 0, \\
&\qquad 1, -1, 0, 0, 0, 0, 0, 0, 0, 1, -1, 0, 1, -1, 0, 1, -1, 0, 0). 
\end{align*}
Then $A=A' \oplus B$ satisfies $\S(A)=S=\semigroup{15,21,33}$. 
\end{example}

\begin{remark}
If $S = \semigroup{n_1,n_2,\ldots,n_k}$ is a minimally presented numerical semigroup with
$n_1 < n_2 < \cdots < n_k$, then there exists an $n_1 \times n_1$ rational matrix $A$ such that $\S(A) = S$.  
That is, $\mdim S \leq m(S)$.
This is supported by numerical evidence, such as the $6 \times 6$ presentation \eqref{eq:NuggetMatrix} of
$\semigroup{6,9,20}$ in the introduction,
and it has been addressed in the sequel preprint \cite{chhabra2024numericalsemigroupsrationalmatrices}.
\end{remark}

\bibliography{NSRM1}
\bibliographystyle{amsplain}

\end{document}